\newtheorem{ntn}[theorem]{Notation}
\newtheorem{cvn}[theorem]{Convention}
\Crefname{ALC@unique}{Line}{Lines} 
\newcommand{\Dim}{{ \textup{D}}}
\newcommand{\Int}{\mathrm{int}}
\crefname{hypothesis}{Hypothesis}{Hypotheses}
\title{
  On Topology of Three-dimensional Continua \\
  with Singular Points\thanks{
    Qinghai Zhang is the corresponding author.
    Author names are listed in the alphabetical ordering. 
    \funding{This work was supported by
      the National Natural Science Foundation of China (\#12272346)
      and the Fundamental Research Funds
      for the Central Universities 226-2025-00254.}}
}
\author{
  Hao Liang\thanks{School of Mathematics, Foshan University, Foshan, Guangdong,
    528000, China.(\email{lianghao1019@-\\hotmail.com}).}
  \and
  Yunhao Qiu\thanks{School of Mathematical Sciences,
    Zhejiang University, Hangzhou, Zhejiang, 310058, China.
    (\email{qiuyunhao@zju.edu.cn},
    \email{yantanhn@zju.edu.cn},
    \email{qinghai@zju.edu.cn})    
  }
  \and
  Yan Tan\footnotemark[3] 
  \and
  Qinghai Zhang\footnotemark[3] 
  \thanks{
    Institute of Fundamental and Transdisciplinary Research,
    Zhejiang University, Hangzhou, Zhejiang, 310058,
    China.
}
}
\begin{document}

\maketitle

\begin{abstract}
  We propose to model
 the topology of three-dimensional (3D) continua by Yin sets,
 regular open semianalytic sets with bounded boundary.
Our model differs from manifold-based models
 in that singular points of a 3D continuum, i.e.,
 boundary points where the tangent plane is not uniquely defined, 
 are treated not as anomalies
 but as a central subject of our theoretical investigation.
We characterize the local and global topology of Yin sets.
Then we give a unique boundary representation (B-rep) of Yin sets 
 based on the notion of a glued surface, 
 a quotient space of an orientable compact 2-manifold 
 along a one-dimensional CW complex. 
As a prominent advantage of this B-rep,
 global topological invarants such as the number of connected
 components and the number of holes (2-cycles) in each component
 can be extracted in $O(1)$ time. 
Our results apply to 3D continua with arbitrarily complex topology
 and may be useful
 in a number of scientific, engineering, and industrial applications
 such as solid modeling, computer-aided design, 
 and numerical simulations of multiphase flows with topological changes.

\end{abstract}

\begin{keywords}
  Three-dimensional continua,
  singular points, 
  multiphase flows, 
  topological changes, 
  computer-aided design, 
  solid modeling. 
\end{keywords}


%

\begin{MSCcodes}
  76T99, 68U07, 74A50 
\end{MSCcodes}

\section{Introduction}
\label{sec:intro}

The concept of 
 a continuum is of fundamental significance
 in a vast number of scientific and engineering disciplines,  
 such as continuum mechanics, solid modeling, 
 and computer-aided design (CAD). 
Thus it is desirable 
 to model continua both theoretically and algorithmically
 so as to describe their topology, approximate their geometry, 
 and answer queries on derived quantities
 such as the normal vector, the interface curvature,
 and even the Betti numbers.

\subsection{Motivations from solid modeling and CAD}
\label{sec:motiv-from-solid}

There exist an array of continua models with 
 varying mathematical rigor and application ranges. 
In the theoretical model of Requicha and Voelcker
 \cite{requicha83:_solid}, 
 continua in three dimensions (3D) are represented
 by r-sets, i.e., bounded regular closed semianalytic sets
 defined in \Cref{sec:YinSets}; 
 r-sets have been an essential mathematical tool
 for solid modeling since the dawning time of this field
 \cite{requicha78:_mathem_found_const_solid_geomet}.
Unfortunately, as illustrated by \cite[Fig. 5]{zhang20:_boolean}, 
 the boundary representation (B-rep) of r-sets
 is not unique.
For the uniqueness of the B-rep
 and the convenience of topological classification
 of 3D continua via their boundaries, 
 it is desirable to have a new theoretical model
 where 3D continua and their boundaries
 have a one-to-one correspondence.

Algorithmically,
 the boundary 
 of a 3D continuum ${\cal Y}$ is often assumed 
 to be a 2-manifold.
Although this assumption leads to
 an efficient B-rep of ${\cal Y}$, 
 it does not cover all legitimate cases of 3D continua. 
As shown in \Cref{fig:yinsetexm}, 
 it is possible that
 a boundary point 
 is a non-manifold point, 
 cf. \Cref{def:manifoldPoints}, 
 a particular type of singular points
 in \Cref{def:singularPoints}.
These non-manifold points may cause,
 among other things,
 difficulties in the B-rep of 3D continua. 
In most CAD algorithms,
 the B-rep surface of a 3D continuum
 is either assumed to be free of non-manifold/singular points 
 or changed to a 2-manifold
 by removing such points
 \cite{geuziec01:_cutting_and_stitching}.

Roughly speaking,
 the mesh data of current 3D continua models 
 come either from digitizing real-world phenomena/objects
 or from synthesizing virtual data
 such as those created by human designers
 or computer programs such as a grid generator.
Examples of digitized models include those
 in medical imaging \cite{zhang02:_direct_surface_extract},
 surface scanning \cite{bernardini02:_3d_model_aqui_pipe},
 and 3D photography
 \cite{seitz06:_compar_evalua_multi_view_stereo_recons}; 
 data in this category usually
 contain the so-called \emph{defects} or \emph{flaws}
 such as holes and self-intersections. 
Examples of synthetic models include 
 sketch-based models \cite{igarashi06:_teddy_sketch_interface_free_design}, 
 models with implicit mathematical formulations \cite{turk02:_model_with_implicit_surf},
 and CAD systems \cite{farouki99:_close_gap_betw_cad_model}; 
 data in this category 
 are typically assembled from multiple parts
 and may contain other types of flaws such as
 overlaps, pairwise intersections, and self-intersections.
Before these data can be used, 
 their flaws have to be removed
 by a manual and tedious preprocessing known as \emph{mesh repairing}, 
 which has been under intense research; 
 see \cite{attene:13_polygon_mesh_repairing}
 for a survey on how to select mesh repairers
 for given downstream applications.
Things are made even more complicated by the fact that 
 the precise definition of defects or flaws may vary
 according to the downstream applications.
To sum up,
 singular points of 3D continua
 are theoretically treated as anomalies in CAD models 
 and algorithmically handled 
 on a case-by-case basis, 
 leading to algorithmic complexity,
 code bloating, and high maintenance of the software.
 
From a physical viewpoint,
 some defects are \emph{legitimate}
 in that it is possible for 3D objects or real phenomena to display them
 while others are \emph{illegal}
 in that they cannot possibly exist
 in any physically meaningful 3D continua. 
For the examples shown in \mbox{\cite[Fig. 1]{attene:13_polygon_mesh_repairing}}, 
 a singular vertex is legitimate 
 while any hole, i.e.,
 the missing of a triangle (a 2-simplex)
 in a closed surface (a 2-simplicial complex), is illegal. 
Indeed, as indicated by \Cref{fig:yinsetexm}(a),
 a singular vertex or a curve of such points may show up
 during merging or pinch-off. 
In contrast, any hole is illegal
 because it incurs an inconsistency
 in the B-rep of a 3D continuum. 
Therefore,
 for current algorithms in solid modeling and CAD, 
 it might be useful to enumerate and classify all possible scenarios
 of singular points of 3D continua.
Such a result is given in \Cref{thm:globalTopology}.

\subsection{Motivations from integrating CAD and CAE (Computer-Aided Engineering)}
\label{sec:motiv-from-integr}

During the development of a new product,
 CAD systems help the developer with the geometric design of the prototype
 while CAE systems test its performance by computer simulations.
For example, 
 the shape and assembly of a water turbine 
 are designed inside a CAD system
 such as \mbox{AutoCAD\texttrademark} \
 while the performance of the turbine
 is tested inside a CAE system such as Fluent\texttrademark.
The feedback provided by the CAE system may lead to 
 a change of the geometric design, 
 which, in turn, leads to new CAE results; 
 this process is usually repeated
 until both the shape and the performance of the updated prototype
 become acceptable. 

Traditionally, CAD designers and CAE specialists
 are two disparate groups of people,
 whose communications are very limited and infrequent.
Their distinct sets of skills and different viewpoints
 often lead to divergent opinions.
Consequently,
 the iteration between CAD and CAE tends to be very slow
 and it often takes a long time to reach a consensus of the two parts. 

With the fast advance of science, engineering, and industry, 
 there has been a need to integrate CAD and CAE
 to speed up the pipeline of designing new products;
 see, e.g.,  \cite{tautges00:_common_geomet_modul_cgm,gujarathi11:_para_cad_cae_inter_using_common_data_model,khan21:_cad_cae_hdf}
 and references therein.
In most of these efforts, 
 CAD and CAE software are treated as black boxes 
 and a middleware is developed to exchange data between them. 
One major issue of this approach is the almost unavoidable information loss
 between CAD and CAE,
 in which the two sets of data have, apart from the different formats,
 more fundamental discrepancies.
For example, the inconsistent definitions of tolerances
 make it difficult
 to rebuild the correct topology of CAD models for CAE.
It is also a formidable task to bind different levels of geometric parts
 into integrity that is recognizable by CAE algorithms
 \cite{cao09:_cad_cae_inte_frame_layer_soft_archi}
 \cite[Fig. 1]{khan21:_cad_cae_hdf}. 

Based on the classification of \emph{all} 3D continua in \Cref{thm:globalTopology},
 we propose to separate the description of their topology
 from the approximation of their geometry.
One obvious advantage of this B-rep approach
 is the ease of topology rebuilding and geometric integration
 during the process of data share and exchange:
 the topological structure is agnostic of geometric tolerances
 and different requirements of the geometric approximation
 do no affect the topology.
When the shapes of a CAD model are specified via
 a set of rational parametric surfaces,
 one can use the outstanding algorithm 
 by Jia, Chen, and Yao \cite{jia22:_singularity_comput} 
 to compute all singular points
 and combine \Cref{thm:globalTopology}
 with the unique B-rep of 3D continua in \Cref{thm:connectedYinsetrep}
 to integrate CAD and CAE
 with reduced information losses. 
Also, due to the almost disjoint surfaces in
 \Cref{def:almostDisjointGsurf}, 
 it is sometimes unnecessary for this integration 
 to compute self-intersections
 \cite{li25:_fast_determ_comput_self_intersect_nurbs}
 and overlap regions \cite{yang25:_overlap_region_extra_two_nurbs}
 of B-rep surfaces. 

\subsection{Motivations from interface tracking (IT)
   in simulating multiphase flows}
\label{sec:interf-track-it}

In computational physics,
 recent advances of numerically solving partial differential equations
 on moving domains
 call for high-fidelity simulations 
 that integrate CAD models of complex topology/geometry
 with CAE algorithms/analysis.
One such field is the study of multiphase flows.
 
In drastic comparison to solid modeling,
 a rigorous model of the topology and geometry of 3D continua 
 appears to be absent in continuum mechanics; 
 this is particularly true in computational fluid dynamics
 and multiphase flows. 
In a sharp-interface model of multiphase flows, 
 each phase is represented by a moving continuum, 
 which evolves according to the governing equations
 and the interface/boundary conditions.
Each phase may undergo large geometric deformations
 and even topological changes.
The accurate IT of these phases 
 is crucial in numerically simulating multiphase flows,
 because the IT error not only leads to a lower bound
 on estimation errors of 
 the normal vector and interface curvature \cite{zhang17:_hfes},
 but also adversely affect the accuracy of
 simulating the flow field.

Currently, popular IT methods include
 the volume-of-fluid (VOF) method \cite{Hirt.Nichols_1981_volume},
 the level-set method \cite{osher88:_front_propag_curvat_speed},
 and the front tracking method
 \cite{tryggvason01:_front_track_method_comput_multip_flow}. 
In VOF methods,
 the region occupied by a fluid phase $\mathsf{M}$
 is modeled by the point set
 $\mathcal{M}(t):=\{\mathbf{x}: \chi(\mathbf{x},t)=1\}$
 where the \emph{color function} $\chi(\mathbf{x},t)$
 is 1 if there is $\mathsf{M}$ at $({\bf x},t)$
 and 0 otherwise.
At each time $t$,
 $\mathcal{M}(t)$ is implicitly represented
 by volume fractions of $\chi$
 in all control volumes.
Within a given time interval, 
 ${\cal M}(t)$ is evolved by numerically solving
 the governing equation of $\chi$,
 be it the scalar conservation law
 or the advection equation. 
%
In level-set methods,
 an interface is implicitly represented
 as the zero isocontour of a signed distance function,
 and is tracked by numerically solving
 the scalar conservation law or the advection equation. 
In the front tracking method
 \cite{tryggvason01:_front_track_method_comput_multip_flow},
 the boundary of a phase
 is explicitly represented by a set of connected Lagrangian markers
 and the IT of this phase is reduced to tracking these markers
 via numerically solving ordinary differential equations.
In all of the aforementioned IT methods,
 \emph{topological and geometric problems in IT are avoided
 as they are converted to
 numerically solving differential equations}.

Our approach to IT is fundamentally different:
 \emph{we tackle topological and geometric problems in IT
 with tools in topology and geometry}.
This principle of \emph{fluid modeling}
 has been directing our work in two dimensions (2D): 
 we have established a model of 2D continua  
 called Yin sets \cite{zhang20:_boolean},
 completely classified all 2D Yin sets,
 designed a unique B-rep of 2D continua, 
 equipped the Yin sets with a natural Boolean algebra,
 proposed an analytic framework called MARS
 (mapping and adjusting regular semianalytic sets)
 for analyzing explicit IT methods \cite{zhang16:_mars},
 and developed several MARS methods
 \cite{zhang18:_cubic_mars_method_fourt_sixth,TaQi25}
 for high-order IT in 2D.
By performing standard benchmark tests
 for both two-phase IT \cite{zhang18:_cubic_mars_method_fourt_sixth}
 and three-or-more-phases IT \cite{TaQi25}, 
 we have shown that the MARS methods
 are superior over current IT methods
 in terms of accuracy, efficiency,
 and preserving topological structures and geometric features. 
Given the explicit modeling of 2D fluids, 
 the supremacy of MARS methods is not surprising.

A major motivation of this work
 is the generalization of fluid modeling from 2D to 3D.
As a fundamental difference of IT from CAD,
 the singular points
 that characterize topological changes of a continuum
 should never be removed.
Instead,
 these singular points should be in the spotlight of both theory and algorithms
 so that homeomorphic deformations
 and topological changes of a 3D continuum
 can be handled accurately and efficiently
 for arbitrarily complex topology. 
 
\subsection{Contributions of this work}
\label{sec:contr-this-work}

The discussions in \Cref{sec:motiv-from-solid,sec:motiv-from-integr,sec:interf-track-it}
 manifest common topological and geometric problems
 in a wide range of scientific, engineering, and industrial endeavors.
 
\begin{enumerate}[(Q-1)]
 \item Can we design, 
   from a minimum set of natural physical constraints,
   a 3D continua model 
   that neither includes nonphysical singular points 
   nor excludes the legitimate ones?
 \item Can we rigorously characterize the local and global topology
   of 3D continua with singular points?
 \item Based on the answer to (Q-2),
   can we design a scheme of unique B-rep of 3D continua
   by a set of surfaces that are free of overlaps,
   self-intersections, and proper pairwise intersections? 
 \item Sometimes the physics of a 3D continuum
   depends on global topological invariants
   such as the number of connected components of a fluid phase
   and the number of holes inside a connected component.
   For example,
   the number of bubbles per unit volume in a bubbly flow
   is a crucial characterization of the fluid properties.
   So can we extract these topological invariants in $O(1)$ time?
 \end{enumerate}

In this work, we give positive answers to all the above questions.
 
In \Cref{sec:YinSets},
 we briefly review the Yin space proposed in \cite{zhang20:_boolean};
 it is fortuitous that the Yin sets in \Cref{def:YinSet}
 originally designed for 2D continua 
 worked out in 3D without any modifications. 
The 3D Yin space is also our answer to (Q-1). 
In \Cref{sec:singularPoints},
 all characteristic boundary points of a 3D Yin set
 are included in the notion of singular points.
In \Cref{sec:compact-2-manifolds},
 we define a surface as an orientable compact 2-manifold
 and introduce the important notions
 of proper and improper intersections of surfaces.
In \Cref{sec:localTopology}, 
 we examine the local topology of 3D continua 
 by focusing on the good neighborhood of singular points.
The key concept of good pairing in \Cref{sec:good-pairing}
 leads to \Cref{thm:globalTopology} on the global topology, 
 which answers (Q-2).
Different from the 2D case, 
 it is not obvious
 how to deduce a unique B-rep of 3D continua
 from the main theorem on the global topology.
In \Cref{sec:representation}, 
 we define the new concept of glued surfaces, 
 employ it as the basic building block
 in decomposing the boundary of a 3D continuum,
 and show in \Cref{thm:connectedYinsetrep}
 that any connected 3D Yin set
 can be uniquely represented
 by a set of oriented glued surfaces.
Apart from answering (Q-3,4),
 this unique 3D B-rep
 has the same form as that of the 2D case,
 with glued surfaces replacing the Jordan curves
 in \cite[eq. (3.6)]{zhang20:_boolean}.

Following the theory \cite{zhang20:_boolean}
 and algorithms \cite{TaQi25} of 2D fluid modeling, 
 this work is our first theoretical step in 3D fluid modeling
 for multiphase flows. 




\section{The Yin space}
\label{sec:YinSets}

In a topological space ${\mathcal X}$,
 the \emph{complement} of a subset ${\mathcal P}\subseteq {\mathcal X}$,
 written ${\mathcal P}'$,
 is the set ${\mathcal X}\setminus {\mathcal P}$.
The \emph{closure} of a set ${\mathcal P}\subseteq{\mathcal X}$,
 written $\overline{{\mathcal P}}$,
 is the intersection of all closed 
 supersets of ${\mathcal P}$.
The \emph{interior} of ${\mathcal P}$, written ${\mathcal P}^{\circ}$,
 is the union of all open subsets of ${\mathcal P}$.
The \emph{exterior} of ${\mathcal P}$,
 written ${\mathcal P}^{\perp}:= {\mathcal P}^{\prime\circ}
 :=({\mathcal P}')^{\circ}$,
 is the interior of its complement.
By the identity $\overline{{\mathcal P}} ={\mathcal P}^{\prime\circ\prime}$
 \cite[p. 58]{givant09:_introd_boolean_algeb},
 we have 
 ${\mathcal P}^{\perp}={\overline{\mathcal P}}^{\prime}$.
A point $\mathbf{x}\in {\mathcal X}$ is
 a \emph{boundary point} of ${\mathcal P}$
 if $\mathbf{x}\not\in {\mathcal P}^{\circ}$
 and $\mathbf{x}\not\in {\mathcal P}^{\perp}$.
The \emph{boundary} of ${\mathcal P}$, written $\partial {\mathcal P}$,
 is the set of all boundary points of ${\mathcal P}$.
It can be shown that
 ${\mathcal P}^{\circ}={\mathcal P}\setminus \partial {\mathcal P}$
 and
 ${\overline{\mathcal P}}= {\mathcal P}\cup \partial {\mathcal P}$.
A set ${\mathcal P}\subseteq {\mathcal X}$ is \emph{regular open} if
 it coincides with the interior of its own closure,
 i.e. if ${\mathcal P}={\overline{\mathcal P}}^{\circ}$,
 and is \emph{regular closed} if
 it coincides with the closure of its own interior,
 i.e. if ${\mathcal P}=\overline{{\mathcal P}^{\circ}}$.
The duality of the interior and closure operators
 implies
 ${\mathcal P}^{\circ} =\overline{{\mathcal P}^{\prime}}^{\prime}$,
 hence ${\mathcal P}$ is a regular open set
 if and only if ${\mathcal P}={\mathcal P}^{\perp\perp}
 :=\left({\mathcal P}^{\perp}\right)^{\perp}$.
For any subset $Q\subseteq{\mathcal X}$,
 it can be shown that $Q^{\perp\perp}$ is a regular open set
 and $\overline{Q^{\circ}}$ is a regular closed set.

Regular sets, open or closed, 
 capture the salient feature 
 that continua are free of lower-dimensional elements
 such as isolated points and curves in $\mathbb{R}^2$
 and dangling faces in $\mathbb{R}^3$.
However, regular sets are not perfect for representing
 physically meaningful regions yet: 
 some of them
 cannot be described by a finite number of symbol structures.
For example, some sets
 have nowhere differentiable boundaries,
 which, in their parametric forms,
 are usually infinite series of continuous functions
 \cite{sagan92:_elemen_proof_schoen_space_fillin}.
Another pathological case
 is more subtle:
 intersecting two regular sets 
 may yield an infinite number of disjoint regular sets.
Consider 
 \begin{equation}
   \label{eq:pathologicalIntersection}
   \left\{
   \begin{array}{l}
   {\mathcal A}_p := \{(x,y)\in \mathbb{R}^2 :
    -2< y< \sin\frac{1}{x},\ 
    0< x< 1 \},\\
   {\mathcal A}_s := \{(x,y)\in \mathbb{R}^2 :
    0 < y < 1,\ 
    -1< x< 1 \}.
   \end{array}
   \right.
 \end{equation}

Although both ${\mathcal A}_p$ and ${\mathcal A}_s$
 are described by two inequalities,
 their intersection
 is a disjoint union of an infinite number of regular sets;
 see
 \cite[Fig. 4-1, Fig. 4-2]{requicha77:_mathem_model_rigid_solid_objec}.
This poses a fundamental problem
 that results of Boolean operations of two regular sets
 may not be well represented on a computer
 by a finite number of entitites.

Therefore, 
 we need to find a proper subspace of regular sets,
 each element of which is finitely describable.
This search eventually
 arrives at semianalytic sets.

\begin{definition}[Semianalytic sets]
  \label{def:semianalytic-sets}
  A set ${\mathcal S}\subseteq\mathbb{R}^{\Dim}$ is \emph{semianalytic}
   if there exist a finite number of 
   analytic functions $g_i:\mathbb{R}^{\Dim}\rightarrow \mathbb{R}$
   such that ${\mathcal S}$ is in the universe
   of a finite Boolean algebra formed from the sets
   \begin{equation}
     \label{eq:semiAnalyticForm}
     {\mathcal X}_i=\left\{\mathbf{x}\in \mathbb{R}^{\Dim}
       :  g_i(\mathbf{x})\ge 0\right\}.
   \end{equation}
  The $g_i$'s are called the \emph{generating functions}
   of ${\mathcal S}$.
  In particular, a semianalytic set is \emph{semialgebraic}
  if all of its generating functions are polynomials.
\end{definition}

Recall that a function is \emph{analytic}
 if and only if
 its Taylor series at $\mathbf{x}_0$
 converges to the function in some neighborhood
 for every $\mathbf{x}_0$ in its domain.
In the example of (\ref{eq:pathologicalIntersection}),
 ${\mathcal A}_s$ is semianalytic while 
 ${\mathcal A}_p$ is not,
 because the Taylor series of $g_2(x,y):=\sin\frac{1}{x}-y$
 at the origin does not converge.
Roughly speaking,
 the boundary curves of regular semianalytic sets
 are piecewise smooth.

As mentioned in the opening paragraph of \Cref{sec:motiv-from-solid},
 the B-rep of regular closed sets is not unique.
In addition, 
 the analysis of explicit IT methods
 via the theory of donating regions \cite{zhang13:_donat_region}
 also requires that the regular sets be open.
Therefore,
 only regular open semianalytic sets are employed in this work.

\begin{definition}
  \label{def:YinSet}
  A \emph{Yin set}\footnote{
    Yin sets are named after Qinghai Zhang's mentor, Madam Ping Yin.
    As a coincidence,
     the most important dichotomy in Taoism
     consists of Yin and Yang,
     where Yang represents the active, the straight, the ascending,
     and so on, 
     while Yin represents the passive, the circular, the descending,
     and so on.
  From this viewpoint, straight lines and Jordan curves can be 
   considered as Yang 1-manifolds and Yin 1-manifolds, respectively.
  } $\mathcal{Y}\subseteq \mathbb{R}^{\Dim}$ ($\Dim =2,3$)
   is a regular open semianalytic set
   whose boundary is bounded.
  The class of all such Yin sets form
   the \emph{Yin space} $\mathbb{Y}$.
\end{definition}

In this work we focus on the 3D case; 
 see \Cref{fig:yinsetexm} for examples of 3D Yin sets. 

 \begin{figure}
   \centering 
   \subfigure[each singular point is an isolated non-manifold point
   with unique tangent plane
   ]{
     \includegraphics[width=0.27\textwidth]{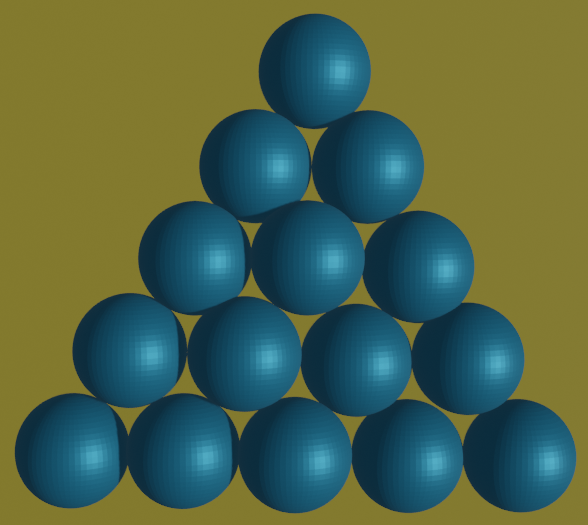}
   }
   \hfill
   \subfigure[all singular points are non-isolated non-manifold
   points in one connected component]{
     \includegraphics[width=0.36\textwidth]{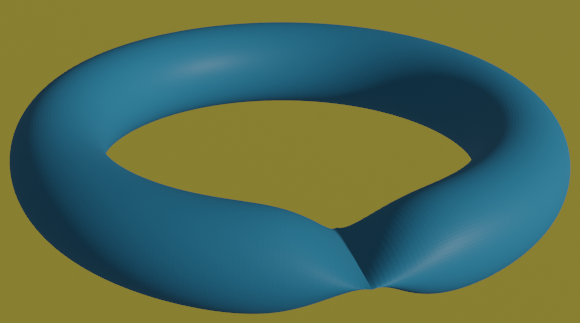}
   }
   \hfill
   \subfigure[all non-manifold points are non-isolated
   and in multiple connected components]{
     \includegraphics[width=0.27\textwidth]{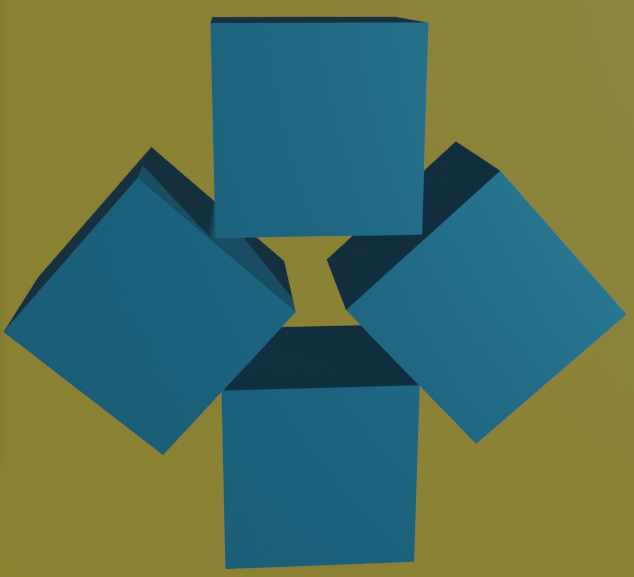}
   }

   \subfigure[all non-isolated non-manifold points
   are in a single connected component
   consisting of two intersecting curves (top view)]{
     \includegraphics[width=0.31\textwidth]{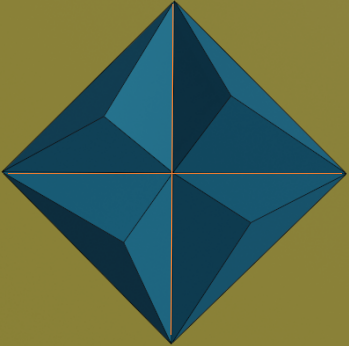}
   }
   \hfill
   \subfigure[the bottom view of the Yin set in (d);
   the four endpoints and the intersection of
   the two yellow lines are vertex non-manifold points
   ]{
     \includegraphics[width=0.32\textwidth]{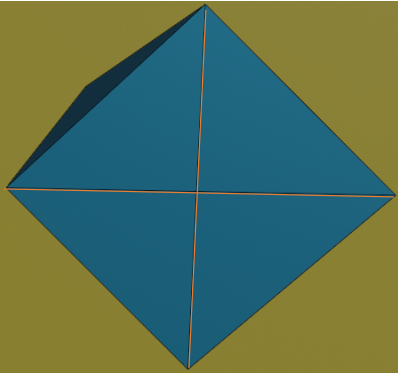}
   }
   \hfill
   \subfigure[all singular points are non-manifold, non-isolated, 
   and in two connected components, 
   each of which is a Jordan curve]{
     \includegraphics[height=0.29\textwidth]{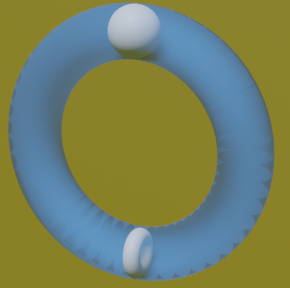}}
   \caption{Examples of 3D Yin sets and singular points on their boundaries.
     The Yin set in (d) consists of four tetrahedrons
     whose bottom edges touch upon each other along the two yellow lines. 
     Each connected component of a Yin set in (a--e)
     is homeomorphic to the unit open ball
     while the Yin set in (f) is not: 
     it is obtained by removing
     a sphere and a small torus from the big solid torus.
     The Yin sets in (a,b,f) only have non-manifold singular points
     while those in (c-e) also contain kinks. 
  }
  \label{fig:yinsetexm}
 \end{figure}


\section{Singular points of a Yin set}
\label{sec:singularPoints}

In this section, we capture all characteristic points
 of a Yin set in the definition of singular points
 and give several different dichotomies of them 
 to facilitate the analysis and the unique B-rep
 in later sections.
 
\begin{definition}[Manifold/non-manifold points]
  \label{def:manifoldPoints}
  A boundary point $p$ of a Yin set ${\cal Y}\subset \mathbb{R}^3$
  is called a \emph{manifold point} of $\partial {\cal Y}$ 
  if there exists a sufficiently small open ball
  ${\cal N}(p)\subset \mathbb{R}^3$ centered at $p$
  such that ${\cal N}(p)\cap \partial{\cal Y}$
  is homeomorphic to the unit open disk in $\mathbb{R}^2$;
  otherwise $p$ is a \emph{non-manifold point}
  of $\partial {\cal Y}$. 
\end{definition}

\begin{definition}[Singular/non-singular points]
  \label{def:singularPoints}
  A boundary point $p$ of a Yin set ${\cal Y}\subset \mathbb{R}^3$
  is \emph{singular}
  if $p$ is a non-manifold point
  or the tangent plane of $\partial {\cal Y}$ at $p$
  is not uniquely defined; 
  otherwise $p$ is \emph{non-singular}.
\end{definition}

By \Cref{def:singularPoints,def:manifoldPoints},
 a non-manifold point is always a singular point; 
 conversely,
 a singular point is either a non-manifold point or a \emph{kink},
 i.e., a manifold point of $\partial {\cal Y}$
 with ${\cal C}^1$ discontinuity. 
 
\begin{definition}[Isolated/non-isolated non-manifold points]
  \label{def:isolatedSingular}
  A non-manifold point $p\in \partial {\cal Y}$ is \emph{isolated}
  if $p$ is the only non-manifold point
  in ${\cal N}(p)\cap \partial{\cal Y}$
  for some sufficiently small open ball
  ${\cal N}(p)\subset \mathbb{R}^3$ centered at $p$;
  otherwise $p$ is \emph{non-isolated}.
\end{definition}

\begin{definition}[Vertex/non-vertex non-manifold points]
  \label{def:VertexSingularPoint}
  A non-isolated non-manifold point $p$ 
  of a Yin set ${\cal Y}$
  is said to be \emph{non-vertex non-manifold}
  if there exists a sufficiently small ${\cal N}(p)\subset \mathbb{R}^3$
  such that the set of all non-isolated non-manifold points 
  in ${\cal N}(p)\cap\partial {\cal Y}$
  is homeomorphic to $(0,1)$; 
  otherwise $p$ is \emph{vertex non-manifold}.
\end{definition}

In a sufficiently small neighborhood of a vertex non-manifold point $p$, 
 the set of non-isolated non-manifold points
 is homeomorphic either to $[0,1)$ or to a \emph{star},
 i.e., the union of three or more curves
 which intersect at their common endpoint $p$
 and are otherwise pairwise disjoint. 

\begin{figure}
  \centering 
  \includegraphics[width=0.8\textwidth]{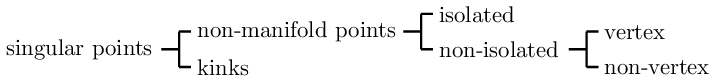}
  \caption{The relations of various singular points.
  }
  \label{fig:relationsOfSingularPoints}
\end{figure}

By \Cref{def:isolatedSingular,def:VertexSingularPoint}, 
 each non-manifold point of a Yin set
 is either isolated or non-isolated;
 each non-isolated non-manifold point
 is either vertex non-manifold or non-vertex non-manifold.
The three sets of
 isolated, vertex, or non-vertex non-manifold points
 are pairwise disjoint; 
 see \Cref{fig:relationsOfSingularPoints}
 for relations of these singular points
 and \Cref{fig:yinsetexm} for some examples.

In a similar manner, 
 we can also define isolated, vertex, and non-vertex kinks, 
 but their topology is usually simpler than
 (or at most similar with)
 that of non-manifold points.
Therefore in this work we do not study them except in \Cref{lem:CW-complex}.



\section{A surface is a piecewise-smooth orientable compact 2-manifold}
\label{sec:compact-2-manifolds}

An \emph{$m$-manifold} is a second countable Hausdorff space 
 that is locally homeomorphic to $\mathbb{R}^m$.
For example, a 1-manifold could be a Jordan curve, a line,
 or a Jordan curve minus one point; 
a 2-manifold could be a simple closed surface
 or an \emph{open surface patch}, i.e.,
 the homeomorphic image of $(0,1)^2$ under a continuous map
 $(0,1)^2\mapsto \mathbb{R}^3$.
An \emph{$m$-manifold with boundary}
 is the union of an $m$-manifold ${\cal S}$
 and an $(m-1)$-manifold that is the boundary of ${\cal S}$. 
Consider the \emph{$n$-disk} and the \emph{$n$-sphere}, 
 \begin{equation}
   \label{eq:nDiskAndnSphere}
   \mathbb{D}^{n}:=
   \{\mathbf{x}\in\mathbb{R}^{n}: \|\mathbf{x}\|\le 1\};
   \quad
   \mathbb{S}^{n}:=\{\mathbf{x}\in\mathbb{R}^{n+1}: \|\mathbf{x}\|=1\}.
 \end{equation}
$\mathbb{D}^2$ is a 2-manifold with boundary
 and $\mathbb{S}^1$ is the 1-manifold boundary of $\mathbb{D}^2$.


For a smooth compact 2-manifold ${\cal S}$, 
 the tangent space $T_p$ of $p\in{\cal S}$ 
 is a 2D real vector space.  
Two bases $(\mathbf{v}_1,\mathbf{v}_2)$ and $(\mathbf{u}_1,\mathbf{u}_2)$ of $T_p$
 \emph{define the same orientation}
 if the sign of the determinant of the change-of-basis matrix
 from $(\mathbf{v}_1,\mathbf{v}_2)$ to $(\mathbf{u}_1,\mathbf{u}_2)$ is $+$; 
 otherwise they \emph{define different orientations}.
This binary relation of ``defining the same orientation'' 
 is an equivalence relation on all bases of $T_p$ 
 and each of the two equivalence classes
 is called an \emph{orientation of the tangent space} $T_p$.
The \emph{orientation of a point} $p\in {\cal S}$
 is identified with that of $T_p$.

A smooth compact 2-manifold ${\cal S}$
 is \emph{orientable} if each point in ${\cal S}$
 can be assigned an orientation such that
 any two points sufficiently close have the same orientation.
A piecewise-smooth compact 2-manifold ${\cal S}$
 is \emph{orientable} if
 it can be approximated arbitrarily well
 by a smooth orientable compact 2-manifold.

\begin{definition}[Surface]
  \label{def:surface}
  A \emph{surface} is a piecewise-smooth orientable compact 2-manifold 
  and a \emph{surface with boundary}
  is a piecewise-smooth orientable compact 2-manifold with boundary.
\end{definition}

The following concepts of proper and improper intersections
 are crucial in the unique B-rep of Yin sets,
 cf. \Cref{def:almostDisjointGsurf};
 also see \Cref{fig:intersections} for an illustration. 

\begin{definition}[Intersections of surfaces and/or surfaces with boundary]
  \label{def:intersectionsOfGluedSurfs}
  Let $\mathcal{S}_1$ and $\mathcal{S}_2$ be
  two surfaces or surfaces with boundary; 
  they are \emph{disjoint}
  if $\mathcal{S}_1\cap \mathcal{S}_2=\emptyset$. 
  Otherwise a maximally connected component $\gamma$ of
  $\mathcal{S}_1\cap \mathcal{S}_2$ 
  is called an \emph{improper intersection}
  of $\mathcal{S}_1$ and $\mathcal{S}_2$
  if $(\mathcal{S}_2\cap \mathcal{N}(\gamma)) \backslash \gamma$
  is contained in a single component of
  $\mathcal{N}(\gamma) \backslash \mathcal{S}_{1}$
  in a sufficiently small open neighborhood
  $\mathcal{N}(\gamma)\subset \mathbb{R}^3$;
  otherwise $\gamma$ is a \emph{proper intersection}.
\end{definition}

\begin{figure}
  \centering 
  \subfigure[${\cal S}_1$ and ${\cal S}_2$ cross each other
  at a proper intersection]{
    \includegraphics[width=0.45\textwidth]{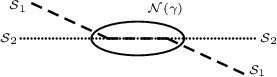}
  }
  \hfill
  \subfigure[${\cal S}_1$ and ${\cal S}_2$ does not cross each other
  at an improper intersection]{
    \includegraphics[width=0.45\textwidth]{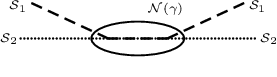}
  }
  \caption{Examples of proper and improper intersections
    in \Cref{def:intersectionsOfGluedSurfs}.
  }
  \label{fig:intersections}
\end{figure}

The following celebrated theorem 
 is one of the early triumphs of algebraic topology;
 see \cite[Chap. 1]{massey77:_alg_topo_an_intro} for a proof.
 
\begin{theorem}[Classification of surfaces]
  \label{thm:manifoldclassification}
  Every surface is homeomorphic
  to a 2-sphere, a torus, or a connected sum of tori.
\end{theorem}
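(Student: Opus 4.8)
The plan is to follow the classical combinatorial argument, specializing the general topological proof to our piecewise-smooth setting where several technical hurdles dissolve. First I would establish that every surface ${\cal S}$ admits a finite triangulation. In the purely topological category this is the nontrivial theorem of Rad\'o, but since \Cref{def:surface} endows ${\cal S}$ with a piecewise-smooth structure, a triangulation can be obtained directly: one covers the compact manifold by finitely many smooth coordinate charts, triangulates within each chart, and resolves overlaps along the finitely many smooth arcs where the pieces meet. The upshot is a representation of ${\cal S}$ as a finite simplicial complex in which every edge is shared by exactly two triangles and the triangles around each vertex form a single cycle.

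Next I would collapse this complex into a single fundamental polygon. Gluing the triangles along the edges of a spanning tree of the dual graph lays them flat as a $2n$-gon; the edges that do not belong to the tree then appear on the polygon's boundary, identified in pairs according to the triangulation. Reading the boundary cyclically yields an edge word $w$ in which each generating symbol $a$ occurs exactly twice, each occurrence carrying an orientation $a$ or $a^{-1}$. The crucial consequence of orientability is that in this word each symbol appears once as $a$ and once as $a^{-1}$; a symbol appearing twice with the same orientation would encode a cross-cap, hence a non-orientable summand, which \Cref{def:surface} forbids.

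The heart of the proof is the normalization of $w$ by elementary cut-and-paste moves, each of which preserves the homeomorphism type of the realized surface. I would first remove every adjacent cancelling pair $a a^{-1}$, unless doing so would empty the word, in which case $w = a a^{-1}$ and the surface is the $2$-sphere. I would then consolidate all vertices of the polygon into a single equivalence class, and finally rearrange the remaining letters so that $w$ becomes a product of commutators $a_1 b_1 a_1^{-1} b_1^{-1} \cdots a_g b_g a_g^{-1} b_g^{-1}$. The key sub-steps are the lemma that a pair of linked symbols $a \cdots b \cdots a^{-1} \cdots b^{-1}$ can always be brought into commutator form, together with the observation that orientability prevents any $aa$ pattern from ever arising, so that no cross-cap term $\prod c_i^2$ can appear.

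To conclude, I would identify the canonical words with the claimed surfaces: the empty reduction gives $\mathbb{S}^2$, a single commutator $a b a^{-1} b^{-1}$ realizes the torus, and a product of $g \ge 2$ commutators realizes the connected sum of $g$ tori; the latter two facts follow from the familiar construction of the torus as a square with opposite sides identified and from the geometric interpretation of concatenating fundamental polygons as a connected sum. The main obstacle I anticipate is the bookkeeping in the word-reduction step: each cut-and-paste move must be shown to terminate and to strictly decrease an appropriate complexity measure, such as the number of vertex classes or the number of distinct symbols, so that the reduction provably reaches canonical form in finitely many steps. A complete treatment of these combinatorial moves is given in \cite[Chap. 1]{massey77:_alg_topo_an_intro}.
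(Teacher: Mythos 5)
Your proposal is correct and is essentially the same argument the paper relies on: the paper gives no proof of its own, instead citing \cite[Chap. 1]{massey77:_alg_topo_an_intro}, whose classification proof is exactly the triangulation, fundamental-polygon, and word-reduction argument you outline (with orientability ruling out cross-cap terms). Your only departure---obtaining the triangulation from the piecewise-smooth structure of \Cref{def:surface} rather than invoking Rad\'o's theorem for topological surfaces---is a legitimate simplification and does not change the substance of the proof.
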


A simple closed curve is homeomorphic to $\mathbb{S}^1$
 and its complement in $\mathbb{R}^{2}$
 consists of two connected components: 
 one bounded and the other unbounded;
 this is the classical Jordan curve theorem
 \cite{jordan87:_cours_daanl_polyt}. 
For $n \ge 2$, the Jordan--Brouwer separation theorem states that
 the complement of $\mathbb{S}^{n-1}$ in $\mathbb{R}^{n}$
 also consists of two connected components: 
 one bounded and the other unbounded;
 see \cite[Theorem 6.35]{rotman88:_intro_to_alge_topo}.
In the particular case of $n = 3$,
 the conclusion of the Jordan--Brouwer separation theorem
 also holds for a surface
 \cite{lima88:_jordan_brouwer_separation_theorem_for_hype,
  _schmaltz09:_jordan_brouwer_separation_theorem}.
This dichotomy of surface complement
 into bounded and unbounded components
 is of fundamental importance
 in defining the key concept of glued surfaces
 in \Cref{def:gluedsurface}.
 
\begin{definition}[Orientations of a surface]
  \label{def:orientationOfSurface}
  For a surface ${\cal S}$ whose orientation is indicated
  by a basis $(\mathbf{v}_1,\mathbf{v}_2)$ of the tangent space $T_p{\cal S}$,
  we regard $\mathbf{v}_1$ and $\mathbf{v}_2$ as vectors in $\mathbb{R}^3$
  by the embedding of $T_p{\cal S}$ in $T_p\mathbb{R}^3$
  and define the \emph{induced normal vector} $\mathbf{n}$ at $p\in{\cal S}$
  as the vector 
  induced from the right-hand rule
  to form a basis $(\mathbf{v}_1,\mathbf{v}_2, \mathbf{n})$ of $\mathbb{R}^3$. 
  The surface ${\cal S}$ is \emph{positively oriented}
  if $\mathbf{n}$ 
  always points from its bounded complement 
  to its unbounded complement;
  otherwise it is \emph{negatively oriented}.
\end{definition}

\begin{definition}[Internal complement of a surface]
  \label{def:interiorOfSurface}
  The \emph{internal complement of a surface ${\cal S}$},
  written $\mathrm{int}({\cal S})$, 
  is its bounded complement
  if ${\cal S}$ is positively oriented
  and its unbounded complement
  if ${\cal S}$ is negatively oriented.
\end{definition}

\Cref{def:orientationOfSurface,def:interiorOfSurface}
pave the way to the unique B-rep
in \Cref{thm:connectedYinsetrep}.



\section{Local topology}
\label{sec:localTopology}

For an interior point or a manifold point of a Yin set,
 its local neighborhood has the trivial topology
 of a ball or a half ball, respectively. 
So we are mostly interested
 in the local topology of non-manifold points.
 
\subsection{The good neighborhood of a boundary point}
\label{sec:topology-at-singular}

We start with 

\begin{definition}[Generalized disk]
  \label{def:genDisk}
  For a Yin set ${\cal Y}$, 
  a \emph{generalized disk ${\cal D}$ at a boundary point} $p\in \partial {\cal Y}$
  within an open ball
  \mbox{$\mathcal{N}(p)\subset \mathbb{R}^3$} 
  is the homeomorphic image of
  $\mathbb{D}^2$ in (\ref{eq:nDiskAndnSphere})
  under a continuous map
  $f: \mathbb{D}^2 \rightarrow
  \partial {\cal Y} \cap \overline{\mathcal{N}(p)}$,
  such that $p=f(\mathbf{0})$
  and $f(\mathbb{S}^1)\subset \partial {\cal N}(p)$,
  where $\mathbb{S}^1$ is the boundary of $\mathbb{D}^2$
  and $f(\mathbb{S}^1)$ is called
  the \emph{boundary of the generalized disk},
  written $\partial {\cal D}$.
\end{definition}

In \Cref{fig:yinsetexm}(a),
 each isolated singular point is the intersection
 of two generalized disks. 
By \Cref{def:isolatedSingular,def:genDisk},
 a boundary point $p\in \partial {\cal Y}$ is non-singular
 if and only if $p$ is not a kink
 and the number of generalized disks at $p$ is 1.

\begin{definition}[Generalized radius]
  \label{def:genRadius}
  For a Yin set ${\cal Y}$, 
  a \emph{generalized radius of a boundary point} $p\in \partial {\cal Y}$
  within an open ball
  \mbox{$\mathcal{N}(p)\subset \mathbb{R}^3$} centered at $p$
  is a simple curve $\gamma\subset\partial {\cal Y}$
  from $p$ to some point in $\partial {\cal Y}\cap\partial \mathcal{N}(p)$.
\end{definition} 

\begin{figure}
  \centering 
  \subfigure[$p$ is a non-vertex non-manifold point]{
    \includegraphics[width=0.43\textwidth]{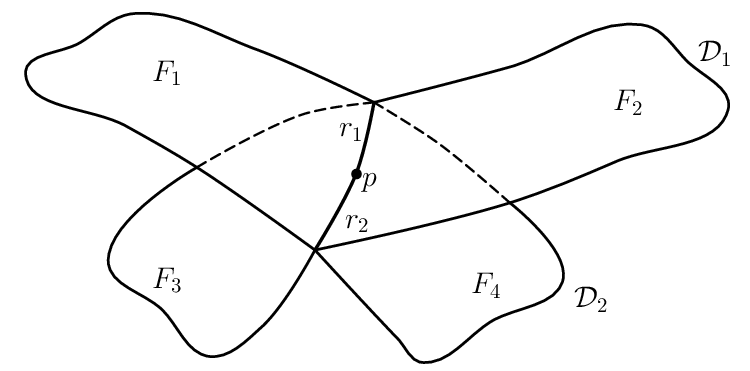}
  }
  \hfill
  \subfigure[$p$ is a vertex non-manifold point]{
    \includegraphics[width=0.43\textwidth]{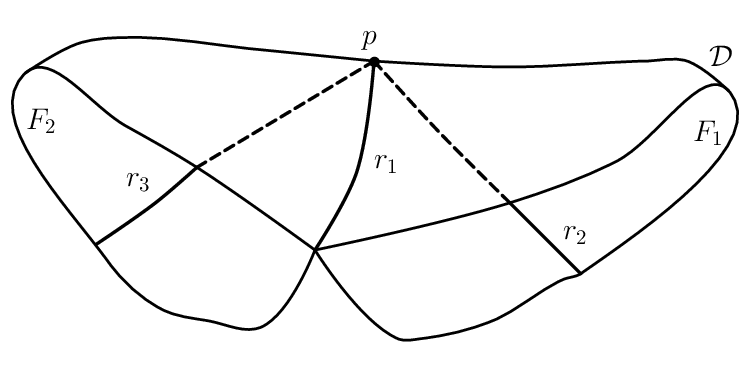}
  }
  \caption{Examples of generalized disks, generalized radii, 
    and generalized sectors.
    In (a), each of the four sectors is on both $r_1$ and $r_2$;
    each of the two disks is the union of two sectors: 
    ${\cal D}_1=F_1\cup F_2$ and ${\cal D}_2=F_3\cup F_4$.
    In (b), $\partial {\cal Y}\cap \overline{{\cal N}(p)}$
    is a folded disk in \Cref{def:foldedDisk}
    with $k=2$ and $t=1$.
    Originally,
    both of the two sectors $F_1$ and $F_2$
    with ${\cal D}=F_1\cup F_2$
    are on the radius $r_1$; 
    the artificial radii $r_2$ and $r_3$
    are added according to \Cref{cvn:evenNumberOfSectors}
    so that each of $F_1$ and $F_2$
    are on two radii.
  }
  \label{fig:disksAndRadii}
\end{figure}

The main purpose of a generalized radius is to represent
 (part of) a connected component of non-isolated non-manifold points.
In \Cref{fig:yinsetexm}(b),
 all non-manifold points are non-isolated
 and they constitute a single simple curve
 $\gamma:[0,1]\rightarrow \partial {\cal Y}$, 
 of which the endpoints are vertex non-manifold
 and all other points are non-vertex non-manifold. 
For each non-vertex non-manifold point $p$,
 the set of all non-isolated non-manifold points
 in a sufficiently small neighborhood ${\cal N}(p)$
 is the union of two generalized radii
 which intersect only at $p$;
 see \Cref{fig:disksAndRadii}(a). 
In contrast, for a vertex non-manifold point $p_{v}$ of $\gamma$, 
 a single generalized radius $r_v$ at $p_{v}$
 might contain all non-isolated non-manifold points
 in ${\cal N}(p_{v})\cap \partial {\cal Y}$;
 see \Cref{fig:disksAndRadii}(b). 

In \Cref{fig:yinsetexm}(c),
 non-manifold points 
 constitute four pairwise distinct simple curves.
In the neighborhood of a vertex non-manifold point
 $q_{\gamma}$ of each curve $\gamma$, 
 the set of all non-isolated non-manifold points
 is a single generalized radius $r_q$ at $q_{\gamma}$.

\begin{definition}[Generalized sector]
  \label{def:genSector}
  For a boundary point $p\in \partial {\cal Y}$
  of a Yin set ${\cal Y}$, 
  two distinct generalized radii $r_1$ and $r_2$ of $p$
  in a generalized disk ${\cal D}$ centered at $p$
  cut ${\cal D}$ into two connected components
  whose boundaries relative to ${\cal D}$ are both $r_1\cup r_2$.
  The closure of each of the two connected components is
   called a \emph{generalized sector}
   with $r_1$ and $r_2$ as the generalized radii \emph{of} this sector.
  If $r$ is a generalized radius of a sector $F$,
    then $F$ is called a generalized sector \emph{on} $r$.
\end{definition}

In \Cref{fig:yinsetexm}(d,e), 
 the single connected component of non-manifold points 
 can be considered as the union of two line segments intersecting only
 at a vertex non-manifold point,
 whose local neighborhood contains four generalized radii
 and eight generalized sectors,
 with four generalized sectors on each generalized radius.

In the aforementioned example of \Cref{fig:yinsetexm}(c),
 there exists a sector $F$ in ${\cal N}(q_{\gamma})$
 of which both generalized radii are $r_q$.
Similarly, in \Cref{fig:yinsetexm}(b),
 each of the two sectors in ${\cal N}(p_{v})$
 is on $r_v$ and only on $r_v$.
These special cases motivate 

\begin{cvn}
  \label{cvn:evenNumberOfSectors}
  Whenever a sector $F$ is on a single radius $r$,
  we pick another radius $r'$ in $F$
  intersecting $r$ only at $p$,
  so that $r$ and $r'$ divide $F$ into two sectors.
\end{cvn}

The ``artificial'' generalized radius $r'$
 is different from $r$ in that
 $r'\setminus \{p\}$ consists of only manifold points of ${\cal Y}$.
\Cref{cvn:evenNumberOfSectors} will be convenient in later analysis 
 since, although the number of sectors
 may vary from one good neighborhood to another, 
 the number of sectors on each radius $r$ is always even. 

All the aforementioned examples in \Cref{fig:yinsetexm} are special cases of
 
\begin{lemma}[Local topology of a boundary point]
  \label{lem:localTopoOfYinSets}
  For a boundary point $p$ 
  of a 3D Yin set ${\cal Y}$,
  any sufficiently small open neighborhood $\mathcal{N}(p)\subset \mathbb{R}^3$
  satisfies 
  \begin{enumerate}[(a)]
  \item $\partial {\cal Y}\cap \overline{\mathcal{N}(p)}$
    is the union of finitely many generalized disks;
  \item Pairwise intersection of the generalized disks in (a) is either
    $p$ itself or the union of finitely many generalized radii pairwise intersecting only at $p$;
  \item  $\mathcal{N}(p)\setminus \partial {\cal Y}$
    consists of disjoint regular open sets;
    two such sets sharing a common sector as part of their boundary
    are subsets of ${\cal Y}$ and ${\cal Y}^{\perp}$, respectively.
  \end{enumerate}
\end{lemma}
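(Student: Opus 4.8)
The plan is to convert the analytic local geometry of $\partial{\cal Y}$ near $p$ into the combinatorics of a finite graph, using the semianalyticity of ${\cal Y}$, and then to read off (a)--(c). Since ${\cal Y}$ is a regular open semianalytic set, $\partial{\cal Y}$ is a closed semianalytic set of dimension at most $2$, and the identity ${\cal Y}={\cal Y}^{\perp\perp}$ forbids dangling lower-dimensional pieces, so that every boundary point is a limit of both ${\cal Y}$ and ${\cal Y}^{\perp}$. The central tool is the local conic structure theorem for semianalytic sets (due to {\L}ojasiewicz): for all sufficiently small radii $\epsilon>0$, writing ${\cal N}(p)$ for the open ball of radius $\epsilon$ about $p$ and $S_{\epsilon}:=\partial{\cal N}(p)$, there is a homeomorphism of pairs taking $\bigl(\overline{{\cal N}(p)},\,\partial{\cal Y}\cap\overline{{\cal N}(p)}\bigr)$ onto the cone with apex $p$ over $(S_{\epsilon},\,L)$, where the \emph{link} is $L:=\partial{\cal Y}\cap S_{\epsilon}$. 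The same homeomorphism carries ${\cal N}(p)\setminus\partial{\cal Y}$ onto the open cone over $S_{\epsilon}\setminus L$ with the apex deleted.

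Next I would analyze $L$. As a compact semianalytic set of dimension at most $1$, it is a finite graph $G$: finitely many vertices joined by finitely many arcs, the vertices being exactly the points where the non-isolated non-manifold curves of $\partial{\cal Y}$ meet $S_{\epsilon}$ (kinks, if any, are inserted as degree-two vertices) and each open arc consisting of manifold points. Under the cone homeomorphism the cone over a vertex is a generalized radius, the cone over an arc is a generalized sector, and the cone over a simple closed curve in $G$ is a generalized disk, because the cone over $\mathbb{S}^{1}$ is $\mathbb{D}^{2}$ and the curve lies on $S_{\epsilon}=\partial{\cal N}(p)$. Thus the lemma becomes a statement about covering $G$ by cycles.

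The structural input that makes this work is an even-degree, or two-colouring, property of $G$. Because $\partial{\cal Y}$ is the common boundary of the disjoint open sets ${\cal Y}$ and ${\cal Y}^{\perp}$ whose union is the complement of $\partial{\cal Y}$, every connected component of ${\cal N}(p)\setminus\partial{\cal Y}$ lies entirely in ${\cal Y}$ or entirely in ${\cal Y}^{\perp}$; semianalyticity rules out the $\sin\frac{1}{x}$-type pathology of \eqref{eq:pathologicalIntersection} and guarantees that these components are finitely many and regular open, which is the first half of (c). At a manifold point the regular-open hypothesis makes $\partial{\cal Y}$ locally separate a ${\cal Y}$-side from a ${\cal Y}^{\perp}$-side, so crossing any single sheet toggles the ${\cal Y}/{\cal Y}^{\perp}$ label; transporting this alternation around a small circle of $S_{\epsilon}$ linking a vertex $v$ and closing up forces the degree of $v$ to be even, which is exactly the phenomenon recorded in \Cref{cvn:evenNumberOfSectors}. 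The second half of (c)---that the two regions flanking a common sector lie in ${\cal Y}$ and ${\cal Y}^{\perp}$ respectively---is this same toggling read across one sheet.

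Finally I would use the elementary fact that the edge set of a finite graph with all degrees even decomposes into edge-disjoint simple cycles. Coning these cycles yields finitely many generalized disks that together cover $\partial{\cal Y}\cap\overline{{\cal N}(p)}$, giving (a); since two of the cycles share no edge, the cone homeomorphism makes the intersection of the corresponding disks equal to $p$ together with the cones over their shared vertices, i.e.\ either $p$ alone or a union of generalized radii meeting only at $p$, giving (b). I expect the main obstacle to be the careful passage from the analytic hypothesis to this clean combinatorial picture: verifying that the conic (equivalently, triangulated) structure of semianalytic sets genuinely applies to the pair $(\overline{{\cal N}(p)},\partial{\cal Y})$ with $L$ one-dimensional, and---most delicately---establishing the two-colouring, i.e.\ that each complementary component is monochromatic and regular open and that crossing a sheet always toggles the label. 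This is the step where the regular-open hypothesis ${\cal Y}={\cal Y}^{\perp\perp}$ must be used in full, both to exclude lower-dimensional artefacts of $\partial{\cal Y}$ and to ensure that every sheet of $\partial{\cal Y}$ actually separates ${\cal Y}$ from ${\cal Y}^{\perp}$.
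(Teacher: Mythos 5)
Your proposal is correct, but it follows a genuinely different route from the paper's own proof. The paper argues analytically and locally: each generating function $g_i$ is Taylor-expanded at $p$ and approximated inside a small ball so that its zero set becomes the graph of a height function, which is taken to give (a) directly; (b) then follows by observing that regular openness forbids two distinct disks from sharing a 2D piece and by triangulating the disks; (c) follows by deleting the finitely many resulting surfaces from $\mathcal{N}(p)$ and running the same ``both sides cannot lie in $\mathcal{Y}$ (nor both in $\mathcal{Y}^{\perp}$)'' argument that you use. Your proof instead invokes the {\L}ojasiewicz conic structure theorem to replace all local analysis by the combinatorics of the link graph $G=\partial\mathcal{Y}\cap S_{\epsilon}$, derives the even-degree property from the $\mathcal{Y}/\mathcal{Y}^{\perp}$ two-colouring (the same phenomenon the paper encodes in \Cref{cvn:evenNumberOfSectors}), and manufactures the disks of (a) by Veblen's decomposition of an even graph into edge-disjoint cycles, coning each cycle to a generalized disk; edge-disjointness then gives (b) for free, since two coned cycles can only meet in $p$ and the radii over shared vertices. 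What your approach buys is rigor precisely where the paper's sketch is weakest: the linearization step fails verbatim at points where all gradients of the $g_i$ vanish (e.g., the cone $x^2+y^2=z^2$ at the origin, exactly the singular points this paper is about), whereas the conic structure theorem is the correct tool there, and your parity argument proves (a) and (c) in one stroke. What the paper's route buys is elementarity and self-containedness, avoiding stratification machinery. One point you should make explicit when writing this up: the toggling-across-an-edge claim needs $S_{\epsilon}$ to meet the manifold strata of $\partial\mathcal{Y}$ transversally, which holds for all sufficiently small $\epsilon$ because the distance-from-$p$ function has no critical points on those strata in the punctured ball; this is part of the standard conic-structure setup (Milnor's argument), so the delicate step you flagged does close, but it deserves a citation rather than a passing remark.
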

\begin{proof}
  By \Cref{def:semianalytic-sets} and ${\cal Y}$ being semianalytic,
  $\partial {\cal Y}\cap \mathcal{N}(p)$
  is defined by a finite number of analytic functions
  $g_i:\mathbb{R}^3\rightarrow\mathbb{R}$,
  each of which can be replaced by its Taylor series expanded at $p$.
  Thus each $g_i$ can be approximated arbitrarily well,
  within a sufficiently small open neighborhood $\mathcal{N}(p)$,
  by a linear function of the form
  $\tilde{g}_i(x,y,z)=a_0+a_1x+a_2y+a_3z$.
  Hence the points satisfying $g_i(x,y,z)=0$ can be regarded
  as the graph of some linear height function \mbox{$\chi=H_i(\xi,\eta)$}.
  Therefore, $\partial {\cal Y}\cap \overline{\mathcal{N}(p)}$ can only contain
  the homeomorphic images of $\mathbb{D}^2$,
  which, by \Cref{def:genDisk}, are generalized disks.
  Therefore (a) holds.
  
  The intersection of any two distinct generalized disks in (a)
   contains no 2D components;
   otherwise it would contradict
   \Cref{def:YinSet}
   that any Yin set is regular open.
  By \Cref{def:genDisk}, each generalized disk
   is a surface with boundary
   and is thus triangularizable. 
  Consequently, ${\cal Y}\cap {\cal N}(p)$
   can be approximated arbitrarily well,
   within a sufficiently small neighborhood ${\cal N}(p)$, 
   by a semianalytic set whose generating functions
   are multivariate linear polynomials
   corresponding to triangles in the triangular mesh.
  The intersection of two such triangular meshes
   is either $p$ itself or
   a finite number of curves intersecting only at $p$.
  Therefore (b) holds.
    
  By (a) and (b),
   we get a finite number of disjoint regular open sets
   after deleting from ${\mathcal N}(p)$ a finite number of surfaces.
  For two such regular open sets sharing a common sector, 
  both cannot be contained in ${\cal Y}$ 
  because this would contradict ${\cal Y}$ being regular open.
  Similarly, both cannot be contained in ${\cal Y}^{\perp}$ either. 
  Hence (c) holds.
\end{proof}

\begin{definition}[Good neighborhood]
  \label{def:goodNeighborhood}
  A \emph{good neighborhood of a boundary point} $p\in \partial {\cal Y}$
  is a sufficiently small open set $\mathcal{N}(p)\supset p$
  satisfying (i) ${\cal N}(p)$ is homeomorphic to the interior of
  $\mathbb{D}^3$
  and (ii) the conclusions of Lemma \ref{lem:localTopoOfYinSets}
  hold in $\mathcal{N}(p)$. 
\end{definition}

By \Cref{lem:localTopoOfYinSets}, 
 the type of a boundary point $p\in \partial {\cal Y}$
 and the topology of $\partial {\cal Y}\cap{\cal N}(p)$
 are independent of the choice of
 the good neighborhood $\mathcal{N}(p)$.

\begin{ntn}
  Hereafter a good neighborhood of a singular point $p$
  will be denoted by $\mathcal{N}(p)$. 
  A \emph{disk} refers to a generalized disk
  in $\partial {\cal Y} \cap\overline{\mathcal{N}(p)}$
  given by Lemma \ref{lem:localTopoOfYinSets}(a), 
  a \emph{radius} refers to a generalized radius
  along which \emph{multiple} disks in $\overline{\mathcal{N}(p)}$ intersect,
  cf. \Cref{def:genRadius}, 
  and  a \emph{sector} refers to a generalized sector
  on a radius as in \Cref{def:genSector}. 
  The \emph{collection of all radii} in $\overline{\mathcal{N}(p)}$
  is denoted by $\mathcal{R}(p)$.
\end{ntn}
 %


\subsection{The global topology of singular points}

The \emph{disjoint union} ${\cal U}_{\cal X}$
 of a family $\{{\cal X}_{\alpha}\}_{\alpha\in J}$
 of topological spaces is given by 
\begin{equation}
  \label{eq:topologicalSum}
  {\cal U}_{\cal X} := \sqcup_{\alpha\in J} {\cal X}_{\alpha}
  :=\cup_{\alpha\in J} {\cal U}_{\alpha} \
  \text{ where }
  {\cal U}_{\alpha} := {\cal X}_{\alpha}\times \{\alpha\}.
\end{equation}
If we topologize ${\cal U}_{\cal X}$ by declaring $U$ to be open in ${\cal U}_{\cal X}$
if and only if $U\cap {\cal U}_{\alpha}$ is open in ${\cal U}_{\alpha}$
for each $\alpha$,
then ${\cal U}_{\cal X}$ is said to be the \emph{topological sum} of the spaces
${\cal U}_{\alpha}$ or ${\cal X}_{\alpha}$.

\begin{definition}[Attaching space/map]
  \label{def:attaching-spaces}
  For two topological spaces ${\cal X}_1$, ${\cal X}_2$,
  and a continuous function $f: B \rightarrow A$
  with $A\subset {\cal X}_1$ and $B\subset {\cal X}_2$,
  the \emph{attaching space} of ${\cal X}_1$ and ${\cal X}_2$,
  written ${\cal X}_1\sqcup_f {\cal X}_2$, 
  is the quotient space of the topological sum ${\cal X}_1\sqcup {\cal X}_2$
  obtained by identifying each $a\in A$ with every $b\in f^{-1}(a)$.
  Then the continuous function $f$ is called
  the \emph{attaching map} of ${\cal X}_1\sqcup_f {\cal X}_2$. 
\end{definition}

In this work the attaching map is assumed to be well behaved
 so that the attaching space always has the quotient topology.
For example,
 let ${\cal X}_2=\mathbb{D}^2$, $B=\partial {\cal X}_2$,
 and ${\cal X}_1=A$ be the space of one point. 
For the attaching map $f: B\rightarrow A$
 that collapses $B$ to the single point in $A$,
 the attaching space ${\cal X}_1\sqcup_f {\cal X}_2$
 is homeomorphic to $\mathbb{S}^2$.

\begin{definition}[1D CW complex]
  A \emph{one-dimensional (1D) CW complex} $K_{1}$
  is an attaching space
  constructed from 0-cells (points)
  $p_1, p_2, \ldots,  p_{k_0}$, 
  1-cells (curve segments homeomorphic to $\mathbb{D}^1$)
  $\gamma_1, \gamma_2, \ldots,  \gamma_{k_1}$
  and an attaching map $f$ as follows.
  \begin{enumerate}[(a)]
  \item The \emph{0-skeleton} of $K_1$ is
    the disjoint union of its 0-cells,
    written $K^{(0)}:= \sqcup_{i=1}^{k_0} p_i$; 
  \item Build the \emph{1-skeleton} as
    an attaching space 
    $K^{(1)} := C^{1} \bigsqcup\nolimits_{f} K^{(0)}$,
    where the attaching map has the signature
    $f: \partial C^1 \to K^{(0)}$
    and $C^{1} := \sqcup_{j=1}^{k_1} \gamma_j$; 
  \item The 1-skeleton is taken to be the cell complex,
    i.e., $K_{1}:= K^{(1)}$.
  \end{enumerate}
\end{definition}
  
A 1D CW complex is always homeomorphic to a graph $G=(V,E,\phi_G)$, 
 where the incidence function $\phi_G: E\rightarrow V\times V$
 can be deduced directly from the attaching map.
When the degree of each vertex in $V$ is either 2 or 1, 
 a 1D CW complex is either a 1-manifold or a 1-manifold with boundary; 
 otherwise it is not a 1-manifold.
For a Yin set shown in \Cref{fig:yinsetexm},
 a connected component of its singular points
 may be one isolated singular point (subplot (a)),
 a curve segment (subplot (b)),
 a 1-manifold (subplot (f)),
 or some 1D subset that is neither a 1-manifold nor a 1-manifold with boundary
 (subplots (c,d,e)).
All these examples are special cases of
 
\begin{lemma}
  \label{lem:CW-complex}
  Any Yin set ${\cal Y}\subset \mathbb{R}^3$ satisfies 
  \begin{enumerate}[(a)]
  \item $\partial {\cal Y}$ contains finitely many isolated singular points;
  \item the non-manifold points of $\partial {\cal Y}$
    form a 1D CW complex,
    so do the manifold singular points of $\partial {\cal Y}$. 
  \end{enumerate}
\end{lemma}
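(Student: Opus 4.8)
The plan is to treat the singular locus as a compact semianalytic set of dimension at most one and then to extract (a) and (b) from two classical facts about such sets: that a compact semianalytic set has finitely many connected components, and that it is triangulable. First I would fix notation. Since ${\cal Y}$ is semianalytic by \Cref{def:YinSet} and the class of semianalytic sets is closed under taking boundaries, $\partial {\cal Y}$ is semianalytic; being closed and bounded, it is compact. By \Cref{def:manifoldPoints,def:singularPoints} the set of non-singular points is open in $\partial {\cal Y}$, because a small ball around such a point meets $\partial {\cal Y}$ in a disk on which the tangent plane is defined and varies continuously, a property inherited by all nearby points. Hence the singular locus $\Sigma := \{p \in \partial {\cal Y} : p \text{ is singular}\}$ is closed, therefore compact, and it splits as a disjoint union $\Sigma = \Sigma_{\mathrm{nm}} \sqcup \Sigma_{\mathrm{k}}$ of the non-manifold points and the kinks, the latter being manifold points by the remark following \Cref{def:singularPoints}.

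Next I would bound the dimension of each piece. \Cref{lem:localTopoOfYinSets} shows that within a good neighborhood of any boundary point the non-manifold points consist of the center together with the finitely many radii along which two or more disks meet, a set of dimension at most one; and a kink is a ${\cal C}^1$-discontinuity whose crease locus on a piecewise-smooth surface is likewise at most one-dimensional. Thus $\Sigma_{\mathrm{nm}}$ and $\Sigma_{\mathrm{k}}$ are compact semianalytic sets of dimension at most one. For part (a), an isolated singular point is precisely an isolated point of $\Sigma$, equivalently a zero-dimensional connected component; since a compact semianalytic set has only finitely many connected components, $\Sigma$ has finitely many of them, and the same applies to the isolated non-manifold points of \Cref{def:isolatedSingular}, which are the zero-dimensional components of $\Sigma_{\mathrm{nm}}$.

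For part (b), I would invoke the triangulation theorem for semianalytic sets (due to {\L}ojasiewicz): a compact semianalytic set of dimension $d$ is homeomorphic to a finite simplicial complex of dimension $d$. Applying it to $\Sigma_{\mathrm{nm}}$ and to $\Sigma_{\mathrm{k}}$ yields finite one-dimensional simplicial complexes, whose vertices are the $0$-cells and whose edges are the $1$-cells of a 1D CW complex in the sense of the paper. This simultaneously handles the non-manifold points and the manifold singular points (kinks), completing (b).

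The hard part will be certifying that ``non-manifold'' and ``kink'' really cut out semianalytic subsets, so that the finiteness and triangulability theorems apply; this is precisely where the stratification theory of semianalytic sets enters. An alternative that sidesteps the global machinery is to argue directly from \Cref{lem:localTopoOfYinSets}: cover the compact set $\partial {\cal Y}$ by finitely many good neighborhoods, in each of which the lemma already displays the non-manifold locus as a finite union of radii meeting at a single point, and then glue these finitely many arcs into one global finite graph, subdividing at their pairwise intersections. Since any two disks meet in only finitely many radii and any two radii meet only at a shared endpoint, the intersections are finite and the result is a 1D CW complex. Either route reduces (b) to the assertion that finitely many semianalytic arcs meeting in finitely many points constitute a 1D CW complex.
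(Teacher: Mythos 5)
Your primary route (semianalyticity of the singular locus plus {\L}ojasiewicz-type finiteness and triangulation theorems) is genuinely different from the paper's proof, but it has a real gap, and it sits exactly where you flagged it. Semianalytic sets are indeed closed under taking boundaries, so $\partial{\cal Y}$ is semianalytic; but ``non-manifold point'' and ``kink'' are topological, not analytic, conditions. Classical results give semianalyticity of the locus where $\partial{\cal Y}$ fails to be an analytic submanifold; the locus where it fails to be a \emph{topological} manifold is another matter, and certifying that it is semianalytic (or even subanalytic) requires locally topologically trivial stratifications in the style of Hardt's theorem. None of that machinery is developed or cited in the paper, and without it you cannot invoke ``a compact semianalytic set has finitely many connected components'' for (a), nor the triangulation theorem for (b), because you do not yet know that the sets you are applying them to are semianalytic. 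So as written, the primary route is incomplete.

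Your fallback route, however, is essentially the paper's own proof, and it closes the gap. The paper proves (a) by compactness plus contradiction: infinitely many isolated singular points would have an accumulation point $y\in\partial{\cal Y}$, and then no neighborhood of $y$ could satisfy \Cref{lem:localTopoOfYinSets}, i.e., $y$ would have no good neighborhood. For (b), the paper takes a connected component $l$ of the non-manifold locus, observes that $l\cap{\cal N}(p)$ is exactly the union of the radii in ${\cal R}(p)$, which intersect only at $p$, so $l$ is locally a star, hence homeomorphic to a graph, hence a 1D CW complex; a separate sequential-limit argument shows the non-manifold locus is closed, hence compact, which gives finiteness. Your proposal to ``cover $\partial{\cal Y}$ by finitely many good neighborhoods and glue the arcs, subdividing at pairwise intersections'' is the same argument organized around a finite cover rather than around connected components; either organization works once \Cref{lem:localTopoOfYinSets} is in hand. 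A small bonus of the covering form is that it yields (a) directly: each good neighborhood contains at most one isolated singular point (its center), so a finite cover bounds their number without a separate contradiction argument.
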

\begin{proof}
  By \Cref{def:YinSet}, $\partial {\cal Y}$ is compact.
  Suppose $\partial {\cal Y}$ contains
  infinitely many isolated singular points.
  Then there must exist a point $y\in \partial {\cal Y}$
  such that any neighborhood $\partial {\cal Y}\cap {\cal N}(y)$
  contains infinitely many isolated singular points.
  By \Cref{def:semianalytic-sets},
   none of the neighborhood of $y$ is a good neighborhood,
   and this contradicts Lemma \ref{lem:localTopoOfYinSets}.
  Therefore (a) holds.
  As for (b), we only consider the case of non-manifold points
  since the proof for manifold singular points is similar.

  Let $l$ be a connected component of the set of all non-manifold
  singular points.
  For any $p\in l$, 
  $l\cap {\cal N}(p)$ is homeomorphic
  to $\bigcup_{r\in \mathcal{R}(p)}r$,
  where all radii intersect only at $p$. 
  So $l$ is locally homeomorphic to a star in a graph.
  Therefore $l$ is homeomorphic to a graph
  and hence can be given the structure of a 1D CW complex.

  To show that the set of non-manifold points is closed in $\partial {\cal Y}$,
  we show that any convergent sequence of non-manifold points
  converges to a non-manifold  point.
  Suppose the point $q$ to which the sequence converges
  is not a non-manifold point.
  Then $q$ is a manifold point
  and \Cref{def:manifoldPoints} implies
  the existence of a good neighborhood
  of $q$ that contains none of the non-manifold points, 
  which contradicts the convergence of the sequence.
  Therefore the set of non-manifold points
  must be compact. 
\end{proof}

The homeomorphism between the 1D CW complex in \Cref{lem:CW-complex}(b)
 and a graph explains
 the names ``vertex non-manifold points''
 and ``non-vertex non-manifold points'' in \Cref{def:VertexSingularPoint}.

\subsection{Folded disks}
\label{sec:folded-disks}

By setting both ${\cal X}_1$ and ${\cal X}_2$ in \Cref{def:attaching-spaces}
 to a generalized disk and requiring that the attaching map
 preserves all radii, we get

\begin{definition}
\label{def:foldedDisk}  
  A \emph{folded disk} is the attaching space of a generalized disk 
  via an attaching map 
  $f: r_1\cup \cdots \cup r_k\rightarrow l_1\cup\cdots\cup l_t$
  such that (i) $r_1,\dots, r_k$ and $l_1, \dots, l_t$
  are radii in the disk
  and (ii) $f$ maps each $r_i$ homeomorphically to $l_j$ for some $j$.
\end{definition}

Consider the Yin set in \Cref{fig:yinsetexm}(d,e). 
For each endpoint $q_e$ of the horizontal or vertical yellow lines, 
 $\partial {\cal Y}\cap \overline{\mathcal{N}(q_e)}$
 is \emph{uniquely} represented as a single folded disk centered at $q_e$
 with $k=2$ and $t=1$; see \Cref{fig:disksAndRadii}(b). 
In particular, $\partial {\cal Y}\cap \overline{\mathcal{N}(q_e)}$
 \emph{cannot} be considered as two disks glued along
 a single radius at $q_e$:
 although in \Cref{fig:disksAndRadii}(b)
 cutting ${\cal D}$ along $r_1$ yields two disks,
 but according to \Cref{def:genDisk}, 
 none of them is a disk \emph{at} $p$. 
In contrast, 
 for the intersection $q_X$ of the horizontal and vertical yellow lines, 
 the representation of $\partial {\cal Y}\cap \overline{\mathcal{N}(q_X)}$
 is far from unique:
 it can be represented as four disks glued along four radii at $q_X$,
 or two folded disks (with $k=2$ and $t=1$) glued along two radii at $q_X$,
 or a folded disk (with $k=4$ and $t=2$) and a disk glued along two radii at $q_X$, etc. 
 
\subsection{Good pairing}
\label{sec:good-pairing}

The ambiguity caused by folded disks is not conducive
 to the unique B-rep for Yin sets. 
A key notion to resolve this ambiguity is 

\begin{definition}[Good pairing]
  \label{def:goodPairing}
  For a non-isolated non-manifold point \mbox{$p\in \partial {\cal Y}$}, 
  a \emph{good pairing of sectors on a single radius $r$ in} $\mathcal{N}(p)$
  is a decomposition of these sectors into pairs such that
  the two set unions $F\cup F'$ and $G\cup G'$
  of any two pairs $(F,F')$ and $(G,G')$
  have no proper intersections, 
  cf. \Cref{def:intersectionsOfGluedSurfs}.
  The good pairings for all radii in $\mathcal{N}(p)$
  constitute a \emph{good pairing (of all sectors) within} $\mathcal{N}(p)$.
\end{definition}

A good pairing of sectors on a radius $r$ in $\overline{\mathcal{N}(p)}$
 is said to \emph{locally bound} ${\cal Y}$ 
 if, for each pair $(F,F')$ in the good pairing,
 $F\cup F'$ is a subset of the boundary of a single maximally connected component
 of $\overline{\mathcal{N}(p)}\cap {\cal Y}$. 
A good pairing within $\overline{\mathcal{N}(p)}$
 is said to \emph{locally bound} ${\cal Y}$
 if the good pairing for \emph{each} radius in $\overline{\mathcal{N}(p)}$
 locally bounds ${\cal Y}$. 

\begin{lemma}[Good pairing for a single radius]
  \label{lem:goodPairingsRadius}
  For a non-manifold point \mbox{$p\in \partial{\cal Y}$},
  the number $n_{\textrm{gp}}$ of different good pairings
  for any radius $r$ in $\mathcal{N}(p)$
  is either 1 or 2.
  For $n_{\textrm{gp}}=1$, the good pairing locally
  bounds both ${\cal Y}$ and ${\cal Y}^{\perp}$.
  For $n_{\textrm{gp}}=2$, 
  one good pairing locally bounds ${\cal Y}$
  while the other locally bounds ${\cal Y}^{\perp}$.
\end{lemma}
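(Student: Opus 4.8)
The plan is to reduce the three-dimensional statement to a purely two-dimensional, combinatorial one by slicing transversally to the radius. First I would fix an interior point $q$ of the radius $r$ (so $q\neq p$) and intersect a small good neighborhood with a disk $\Delta\subset\mathbb{R}^3$ transverse to $r$ at $q$. Because the local picture is product-like along $r$ near $q$, each sector on $r$ meets $\Delta$ in a single ray emanating from the center $q$, and by \Cref{cvn:evenNumberOfSectors} there is an even number $2m$ of such rays $\sigma_1,\ldots,\sigma_{2m}$ in cyclic order. By \Cref{lem:localTopoOfYinSets}(c), the $2m$ open angular regions $R_1,\ldots,R_{2m}$ cut out by these rays lie alternately in $\mathcal{Y}$ and in $\mathcal{Y}^{\perp}$, and the alternation is globally consistent precisely because $2m$ is even. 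Thus a pairing of the sectors on $r$ becomes a perfect matching of $2m$ cyclically ordered points carrying an alternating two-colouring of the gaps between them.

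Next I would translate \Cref{def:intersectionsOfGluedSurfs} into this slice. Since the only local component of $(F\cup F')\cap(G\cup G')$ near $r$ is $r$ itself, the intersection along $r$ is proper exactly when the rays of $G\cup G'$ meet both components of $\Delta\setminus(\sigma\cup\sigma')$, i.e.\ exactly when the two chords interleave. Hence a good pairing is, at the least, a matching whose chords are pairwise non-crossing. I would then exhibit the two canonical candidates: the matching $P_{\mathcal{Y}}$ joining the two rays that bound each $\mathcal{Y}$-region, and the matching $P_{\mathcal{Y}^{\perp}}$ joining the two rays that bound each $\mathcal{Y}^{\perp}$-region. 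Both are visibly non-crossing, hence good; by construction each pair of $P_{\mathcal{Y}}$ cobounds a single component of $\overline{\mathcal{N}(p)}\cap\mathcal{Y}$, so $P_{\mathcal{Y}}$ locally bounds $\mathcal{Y}$, and symmetrically $P_{\mathcal{Y}^{\perp}}$ locally bounds $\mathcal{Y}^{\perp}$. When $2m=2$ there is a single $\mathcal{Y}$-region and a single $\mathcal{Y}^{\perp}$-region, so $P_{\mathcal{Y}}=P_{\mathcal{Y}^{\perp}}$ is the unique pairing and it bounds both phases, giving $n_{\mathrm{gp}}=1$; when $2m\geq 4$ the two matchings differ, giving the two claimed pairings.

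The core of the argument, and the step I expect to be the main obstacle, is uniqueness: that $P_{\mathcal{Y}}$ and $P_{\mathcal{Y}^{\perp}}$ exhaust the good pairings, so that $n_{\mathrm{gp}}\leq 2$. Mere non-crossing is \emph{not} enough to force this, since non-crossing matchings of $2m$ points are Catalan-many once $2m\geq 6$; the two-phase alternation between $\mathcal{Y}$ and $\mathcal{Y}^{\perp}$ must be used decisively. The plan is to show that in a good pairing every matched pair necessarily cobounds a single angular region of $\mathcal{N}(p)\setminus\partial\mathcal{Y}$ — equivalently, joins two cyclically adjacent rays — from which the only two possibilities are $P_{\mathcal{Y}}$ and $P_{\mathcal{Y}^{\perp}}$. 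To establish adjacency I would use that each glued sheet $F\cup F'$ is a coherent two-sided piece of $\partial\mathcal{Y}$, so the two rays of a pair must have opposite type in the alternation and the component of $\overline{\mathcal{N}(p)}\cap\mathcal{Y}$ the pair helps bound is well defined; I would then argue that the no-proper-intersection condition, read against the requirement that the pairing locally bound a single phase, prevents any sector from being skipped. The delicate point is ruling out nested non-adjacent pairs, and I expect a parity argument on the rays trapped between a prospective pair — together with the local-bounding requirement rather than non-crossing alone — to be exactly what closes the gap. Once adjacency is secured, reading off which phase each surviving pairing bounds, both phases when they coincide at $2m=2$ and one each when $2m\geq 4$, completes the proof.
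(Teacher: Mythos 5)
Your slicing setup and the existence half of the argument are sound and essentially reproduce what the paper does implicitly: the transversal slice turns the sectors on $r$ into $2m$ cyclically ordered rays whose complementary wedges alternate between ${\cal Y}$ and ${\cal Y}^{\perp}$ by \Cref{lem:localTopoOfYinSets}(c); the condition of \Cref{def:intersectionsOfGluedSurfs} becomes non-interleaving of chords; and the two adjacent-pair matchings $P_{\cal Y}$ and $P_{{\cal Y}^{\perp}}$ are good, locally bound the two phases respectively, and coincide exactly in the two-sector case arising from \Cref{cvn:evenNumberOfSectors}. The genuine gap is the uniqueness half, $n_{\textrm{gp}}\le 2$, and you concede it yourself: expecting a parity argument to close the gap is a conjecture, not a proof, and it is precisely the step that carries the lemma. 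Moreover, the repair you envision is circular relative to the statement being proved: \Cref{def:goodPairing} defines a good pairing solely by the no-proper-intersection condition, while which pairings locally bound ${\cal Y}$ or ${\cal Y}^{\perp}$ is the lemma's \emph{conclusion}. If you restrict attention to pairings that locally bound a phase, you are counting a different (smaller) set of objects and have changed the statement.

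Your Catalan-number observation is correct and in fact cuts deeper than your proposal acknowledges. For six sectors, the nested matching $\{(1,4),(2,3),(5,6)\}$ is pairwise non-crossing: each pair union lies entirely in a single component of the complement of each other pair union, so all pairwise intersections are improper in the sense of \Cref{def:intersectionsOfGluedSurfs}; yet it pairs non-adjacent sectors and locally bounds neither phase. So pair-against-pair improperness genuinely does not imply adjacency --- which is exactly the claim the paper's own proof disposes of in one line (that \Cref{def:goodPairing} forces paired sectors to be next to each other). The non-circular way to close the gap is to test each pair union not against one other pair at a time but against the union of \emph{all} remaining sectors: if $(F,F')$ is non-adjacent, the remaining sectors meet both components of ${\cal N}(r)\setminus(F\cup F')$, giving a proper intersection, whereas if $(F,F')$ is adjacent they lie in one component. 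Adjacency of every pair then leaves exactly the two matchings (one when $2m=2$), and \Cref{lem:localTopoOfYinSets}(c) identifies which phase each bounds. As it stands, your proposal supplies neither this argument nor any substitute for it, so the central claim of the lemma remains unproved.
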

\begin{proof}
  $n_{\textrm{gp}}=1$ 
  if there are only two sectors within ${\cal N}(p)$,
  i.e., if $p$ is 
  a vertex non-manifold point with
  an artificial radius $r'$ in \Cref{cvn:evenNumberOfSectors}.
  Otherwise \Cref{def:goodNeighborhood} implies that
  the number of sectors on any $r\in\mathcal{R}(p)$
  is an even number greater than 2.
  By \Cref{def:goodPairing},
  two sectors form a pair if and only if they are next to each other.
  Hence there exist two and only two different good pairings.
  The rest of the proof follows from \Cref{lem:localTopoOfYinSets}(c).
\end{proof}



To proceed from the good pairing for a single radius
 to that in the entire good neighborhood, 
 we need
 
\begin{lemma}
  \label{lem:one2one}
 For two distinct non-isolated non-manifold points $p,q\in\partial {\cal Y}$, 
 suppose a radius $r_1$ in $\mathcal{N}(p)$
 has a non-empty intersection with a radius $r_2$ in $\mathcal{N}(q)$.
 Then there is a one-to-one correspondence
 between the sectors of $\mathcal{N}(p)$ on $r_1$
 and sectors of $\mathcal{N}(q)$ on $r_2$,
 where the corresponding sectors intersect in a 2D subset
 of $\mathcal{N}(p)\cap \mathcal{N}(q)$. 
\end{lemma}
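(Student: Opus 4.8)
The plan is to reduce this global statement about two different good neighborhoods to the \emph{intrinsic} local structure of $\partial{\cal Y}$ along the portion of the non-manifold locus shared by the two radii. First I would show that $r_1\cap r_2$ is a nondegenerate sub-arc $\sigma$ of the non-manifold locus lying in the open set $\mathcal{N}(p)\cap\mathcal{N}(q)$. Both $r_1$ and $r_2$ are radii along which multiple disks intersect, so they consist of non-manifold points, which by \Cref{lem:CW-complex}(b) form a 1D CW complex. By \Cref{lem:localTopoOfYinSets}(b) a good neighborhood is small enough that the radii meet only at its center, so every point $x$ interior to $r_1$ (or $r_2$) is a non-vertex non-manifold point and the non-manifold locus is a single arc near $x$. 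Hence, whenever an intersection point lies interior to either radius, $r_1$ and $r_2$ must coincide with this single arc in a neighborhood of it, forcing them to overlap along an arc rather than meeting only in isolated points. This identifies $\sigma:=r_1\cap r_2$ as a nondegenerate arc whose points are interior to both neighborhoods, so $\sigma\subset\mathcal{N}(p)\cap\mathcal{N}(q)$.

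Next I would fix a point $x$ in the relative interior of $\sigma$ and examine $\partial{\cal Y}$ inside a small ball $B\subset\mathcal{N}(p)\cap\mathcal{N}(q)$ about $x$. Since $x$ is a non-vertex non-manifold point, \Cref{lem:localTopoOfYinSets} applied at $x$ shows that $\partial{\cal Y}\cap\overline{B}$ is a finite union of generalized disks meeting along the single arc $\sigma\cap\overline{B}$; cutting along $\sigma$ decomposes it into finitely many half-sheets, i.e.\ the local generalized sectors attached to $\sigma$. The crucial point, which I would invoke from \Cref{lem:localTopoOfYinSets} together with the remark following \Cref{def:goodNeighborhood} that the local topology is independent of the chosen good neighborhood, is that this collection of half-sheets is intrinsic to $\partial{\cal Y}$ at $x$: both their number and their cyclic arrangement around $\sigma$ are determined by $\partial{\cal Y}$ alone, regardless of whether we regard $B$ as sitting inside $\mathcal{N}(p)$ or inside $\mathcal{N}(q)$.

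Then I would build the correspondence through these half-sheets. Each sector $F$ of $\mathcal{N}(p)$ on $r_1$ is a piece of a disk bounded in part by $r_1$, and because $\sigma\subset r_1$ near $x$, restricting $F$ to $B$ yields exactly one half-sheet at $x$. Distinct sectors on $r_1$ meet only along radii and so restrict to distinct half-sheets, while every half-sheet at $x$ has $r_1$ on its boundary arc and thus extends back along $r_1$ to a unique sector on $r_1$; hence the restriction is a bijection between the sectors on $r_1$ and the half-sheets at $x$. The identical argument gives a bijection between the sectors of $\mathcal{N}(q)$ on $r_2$ and the very same set of half-sheets, and composing the two produces the desired one-to-one correspondence $\Phi$. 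Finally, if $F$ on $r_1$ and $G=\Phi(F)$ on $r_2$ restrict to the same half-sheet at $x$, then $F\cap G$ contains the intersection of that half-sheet with $B$, a 2-dimensional subset of $\mathcal{N}(p)\cap\mathcal{N}(q)$, which is exactly the asserted conclusion.

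I expect the main obstacle to be the first step: rigorously excluding the degenerate possibility that $r_1$ and $r_2$ meet only in isolated points (for instance at endpoints on the two bounding spheres), so that the intersection is genuinely a nondegenerate arc interior to both neighborhoods. This is where the good-neighborhood hypothesis must be used with care, to guarantee that interior points of a radius are non-vertex and therefore force the two radii to coincide locally. Once $\sigma$ is secured as a common sub-arc, the identification of half-sheets and the bijection follow directly from the intrinsic local topology in \Cref{lem:localTopoOfYinSets}, and the 2D-intersection claim is then immediate.
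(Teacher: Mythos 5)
Your proof is correct and follows essentially the same route as the paper's: both localize at a point $x\in r_1\cap r_2$, take a small neighborhood of $x$ contained in $\mathcal{N}(p)\cap\mathcal{N}(q)$, and use its local sectors (your ``half-sheets'') as the intermediary through which the two bijections are composed to give the correspondence. Your opening step, rigorously showing that $r_1\cap r_2$ is a nondegenerate arc rather than a set of isolated points, is a worthwhile elaboration of a point the paper's proof leaves implicit when it asserts that a sufficiently small $\mathcal{N}(x)$ has exactly two radii, each a subset of $r_1\cap r_2$.
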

\begin{proof}
  Let $x\in \mathcal{N}(p)$ be a point in $r_1\cap r_2$.
  Consider a good disk decomposition of a good neighborhood
  $\mathcal{N}(x)$. By making $\mathcal{N}(x)$ sufficiently small,
  we can assume that $\mathcal{N}(x)$ is contained in both
  $\mathcal{N}(p)$ and $\mathcal{N}(q)$.
  Note that $\mathcal{N}(x)$ has only two radii,
  each of which is a subset of $r_1\cap r_2$
  as long as we make $\mathcal{N}(x)$ small enough.
  Each sector in $\mathcal{N}(x)$ is contained in a sector of
  $\mathcal{N}(p)$ on $r_1$ and each sector of $\mathcal{N}(p)$ on
  $r_1$ contains exactly one sector of $\mathcal{N}(x)$.
  A similar statement holds for sectors in $\mathcal{N}(x)$ and
  $\mathcal{N}(q)$.
  Therefore, there is a natural one-one correspondence
  between sectors in $\mathcal{N}(p)$ and $\mathcal{N}(q)$
  in that the corresponding pair
  contains the same sector of $\mathcal{N}(x)$.  
\end{proof}

So long as ${\cal N}(p)$ and ${\cal N}(q)$ are good neighborhoods,
 the two distinct non-manifold points $p,q$ in \Cref{lem:one2one}
 can be both vertex non-manifold, or both non-vertex non-manifold, 
 or one vertex non-manifold and the other non-vertex non-manifold.

\begin{lemma}[Good pairings in a good neighborhood 
  of a non-manifold point]
  \label{lem:goodPairingsInNeighborhood}
  For any non-manifold point \mbox{$p\in \partial {\cal Y}$},
  the number $n_{\textrm{gp}}$ of different good pairings in ${\cal N}(p)$
  is either 1 or 2.
  If $n_{\textrm{gp}}=1$, the good pairing locally
  bounds both ${\cal Y}$ and ${\cal Y}^{\perp}$.
  If $n_{\textrm{gp}}=2$, 
  one good pairing locally bounds ${\cal Y}$
  while the other locally bounds ${\cal Y}^{\perp}$.
\end{lemma}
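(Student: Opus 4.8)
The plan is to extend the single-radius dichotomy of \Cref{lem:goodPairingsRadius} to the whole good neighborhood by showing that the per-radius choices are not independent but are locked together by the regions they bound. First I would fix the two canonical candidates. By \Cref{lem:localTopoOfYinSets}(c), the components of $\mathcal{N}(p)\setminus\partial{\cal Y}$ carry a well-defined two-colouring: each is a subset of either ${\cal Y}$ or ${\cal Y}^{\perp}$, and two components abutting along a common sector receive opposite colours. Declaring the \emph{$\mathcal{Y}$-pairing} to pair, on every radius $r\in\mathcal{R}(p)$, the two sectors that border a common component of $\mathcal{N}(p)\cap{\cal Y}$, and the \emph{${\cal Y}^{\perp}$-pairing} analogously, yields two good pairings within $\mathcal{N}(p)$ that locally bound ${\cal Y}$ and ${\cal Y}^{\perp}$ respectively; when every radius carries exactly two sectors (the degenerate case of \Cref{cvn:evenNumberOfSectors}) these coincide and there is only one. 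It remains to prove that \emph{every} good pairing within $\mathcal{N}(p)$ is one of these two.

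For the uniqueness I would argue by propagation. By \Cref{lem:goodPairingsRadius}, the restriction of a good pairing to each radius $r$ already locally bounds ${\cal Y}$ or ${\cal Y}^{\perp}$, so it suffices to show this ``type'' is the same for all radii. Take two sectors $F,F'$ that are paired on a radius $r$; by \Cref{def:goodPairing} the type of $r$ is exactly the colour of the component against which $F\cup F'$ abuts. Each sector $F$ has two bounding radii $r,r'\in\mathcal{R}(p)$ and borders a unique ${\cal Y}$-component $C^{+}$ and a unique ${\cal Y}^{\perp}$-component $C^{-}$ (again \Cref{lem:localTopoOfYinSets}(c)); the key observation is that $C^{+}$ and $C^{-}$ are the \emph{same} components when $F$ is viewed from $r$ and from $r'$, since $F$ is a single connected patch separating $C^{+}$ from $C^{-}$ all along its two radii. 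Hence if the pairing on $r$ follows $C^{+}$, then, to keep $F\cup(\cdot)$ inside the boundary of a single component, the pairing on $r'$ must follow $C^{+}$ as well; otherwise one creates a pair whose union meets $F\cup F'$ in a proper intersection, contradicting \Cref{def:goodPairing}. Thus sharing a sector forces two radii to carry the same type.

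Finally I would invoke connectivity to conclude global constancy of the type. The sectors and radii of $\overline{\mathcal{N}(p)}$ form a finite complex that, by \Cref{lem:localTopoOfYinSets}(a,b), is the cone at $p$ over the link $L=\partial{\cal Y}\cap\partial\mathcal{N}(p)\subset\mathbb{S}^{2}$; moreover the two-colouring is induced globally by ${\cal Y}$ itself, so even across distinct connected components of $L$ the $\mathcal{Y}$-pairing and the ${\cal Y}^{\perp}$-pairing are each determined without any free choice. Combining this with the sector-sharing rigidity of the previous step, and with the sector correspondence of \Cref{lem:one2one} to transport the pairing along each non-manifold curve, the type is constant over all of $\mathcal{R}(p)$. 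Therefore a good pairing within $\mathcal{N}(p)$ is globally the $\mathcal{Y}$-pairing or the ${\cal Y}^{\perp}$-pairing, giving $n_{\textrm{gp}}\in\{1,2\}$ with the stated bounding behaviour. I expect the genuine difficulty to lie in the second paragraph: making precise, from the purely local \Cref{def:intersectionsOfGluedSurfs}, that a mismatch of types across a shared sector is detected as a proper intersection of some pair of pairs, rather than being hidden as a harmless two-dimensional overlap of two unions that share that sector.
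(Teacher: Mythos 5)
Your first paragraph and your overall plan (build the two canonical pairings from the two-colouring of $\mathcal{N}(p)\setminus\partial{\cal Y}$, then prove rigidity across radii and conclude by connectivity) are fine in outline, but the central step of your second paragraph --- that a type mismatch across a shared sector is ``detected as a proper intersection of some pair of pairs'' --- is exactly where the argument breaks, and the worry you voice in your closing sentence is fatal rather than technical. Take the simplest non-vertex configuration: four sectors $F_1,F_3,F_2,F_4$ in cyclic order around the curve $r_1\cup r_2$ (the labels of \Cref{fig:disksAndRadii}(a)), and the mismatched choice that pairs $(F_1,F_3),(F_2,F_4)$ on $r_1$ but $(F_3,F_2),(F_4,F_1)$ on $r_2$. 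Two pair-unions sharing a sector, say $\mathcal{S}_1=F_1\cup F_3$ and $\mathcal{S}_2=F_3\cup F_2$, intersect in the single connected 2D set $\gamma=F_3$; near $\gamma$ the set $(\mathcal{S}_2\cap\mathcal{N}(\gamma))\setminus\gamma$ is a strip of $F_2$ emanating from the curve, and it lies entirely in the one component of $\mathcal{N}(\gamma)\setminus\mathcal{S}_1$ on the side of $F_3$ away from $F_1$. By \Cref{def:intersectionsOfGluedSurfs} this intersection is therefore \emph{improper}: $\mathcal{S}_2$ branches off $\mathcal{S}_1$ without crossing it. The same holds for every other pair of pairs in the mismatched configuration, so no proper intersection ever appears, your propagation step never gets traction, and the global constancy of the type does not follow.

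The paper closes this hole with a different mechanism, which your last paragraph mentions in passing but does not develop: consistency of the per-radius choices is enforced not by intersection properness inside one neighborhood, but by the sector correspondence of \Cref{lem:one2one} between \emph{overlapping good neighborhoods}. For a non-vertex point $p$, one chooses non-vertex points $q_1\in r_1$ and $q_2\in r_2$ with good neighborhoods ${\cal N}(q_1),{\cal N}(q_2)\subset{\cal N}(p)$ whose intersection is nonempty (so their radii overlap across $p$); since sectors of ${\cal N}(q_i)$ span both of their radii, the one-to-one correspondence of sectors transports the pairing on $r_1$ through ${\cal N}(q_1)$ and ${\cal N}(q_2)$ to the pairing on $r_2$ and forces the two to have the same $\mathcal{Y}$/$\mathcal{Y}^\perp$ type. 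For a vertex point, the paper first disposes of the folded-disk case with $t=1$ (all sectors on one genuine radius, the remaining radii artificial by \Cref{cvn:evenNumberOfSectors}), which is the $n_{\textrm{gp}}=1$ case, and then runs the same correspondence argument over a cover of the 1D CW complex of non-manifold points by good neighborhoods. If you want to salvage your argument, the shared-sector rigidity must be reformulated in those terms --- adjacency of sectors around a radius, which the correspondence of \Cref{lem:one2one} preserves --- rather than in terms of \Cref{def:intersectionsOfGluedSurfs}, which cannot distinguish the two situations.
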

\begin{proof}
  First, we consider the case of $p$ being non-vertex non-manifold. 
  By \Cref{def:VertexSingularPoint,def:genRadius},
  $\mathcal{N}(p)$ has exactly two radii $r_1$ and $r_2$
  intersecting only at $p$
  and the number of sectors on each of the two radii
  is 2$n$ where $n>1$.
  Choose two non-vertex non-manifold points $q_1\in r_1$, $q_2\in r_2$
  and two good neighborhoods ${\cal N}(q_1)$, ${\cal N}(q_2)$
  so that ${\cal N}(q_1)\subset {\cal N}(p)$,
  ${\cal N}(q_2)\subset {\cal N}(p)$,
  and ${\cal N}(q_1)\cap {\cal N}(q_2)\ne \emptyset$.
  Then the conclusion follows from \Cref{lem:one2one}. 

  Hereafter we consider the case of $p$ being vertex non-manifold. 

  Suppose $\overline{{\cal N}(p)}\cap \partial {\cal Y}$ can be uniquely represented
  as a folded disk with $t=1$, 
  cf. the example under \Cref{def:foldedDisk} with $k=2$ and $t=1$. 
  Then by \Cref{def:foldedDisk},
  all sectors share the common radius $l_1$.
  By \Cref{cvn:evenNumberOfSectors}, all other radii are artificial.
  Therefore there is only one good pairing of these sectors.

  Otherwise $\overline{{\cal N}(p)}\cap \partial {\cal Y}$
  is either a folded disk with $t>1$ or not a folded disk.
  In both cases,
  \Cref{lem:CW-complex} dictates
  that the set ${\cal I}$ of all non-isolated non-manifold points is a 1D CW complex
  homeomorphic to a graph $G=(V, E, \phi_G)$,
  where $V$ is the set of all vertex non-manifold points
  and $E$ that of curves connecting points in $V$.
  Choose a set $P:=\{p_1,p_2, \ldots, p_m\}$ of points in $E$
  and a good neighborhood for each point in $P\cup V$
  such that these good neighborhoods cover ${\cal I}$
  and the intersection of any two good neighborhoods
  is either the empty set or a 3-manifold.
  Then the proof is completed by applying \Cref{lem:one2one}
  to all pairs of intersecting good neighborhoods. 
\end{proof}

\subsection{Good disk decomposition}
\label{sec:good-disk-decomp}

\Cref{lem:goodPairingsRadius} leads to another characterization
 of the local topology of a non-manifold point
 in terms of disks and folded disks.
 
\begin{lemma}
  \label{lem:diskDecomposition}
  For a non-manifold point $p\in \partial {\cal Y}$, 
  the 2D subset $\partial {\cal Y}\cap \overline{\mathcal{N}(p)}$
  is the union of a finite number of disks and folded disks,
  no pair of which intersect properly. 
  In particular,
  $p$ being non-vertex non-manifold implies that
  $\partial {\cal Y}\cap \overline{\mathcal{N}(p)}$
  is the union of a finite number of disks,
  each of which is the union of a good pair of sectors.
\end{lemma}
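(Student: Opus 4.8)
The plan is to build the required disks and folded disks by assembling the \emph{sectors} of $\mathcal{N}(p)$ according to a fixed good pairing, rather than starting from the raw generalized disks of \Cref{lem:localTopoOfYinSets}(a), which may themselves intersect properly. First I would invoke \Cref{lem:goodPairingsInNeighborhood} to fix a good pairing within $\mathcal{N}(p)$. By \Cref{lem:localTopoOfYinSets}(a,b), \Cref{def:genSector}, and \Cref{cvn:evenNumberOfSectors}, the set $\partial\mathcal{Y}\cap\overline{\mathcal{N}(p)}$ is a finite union of sectors, each bounded by exactly two radii, and on every radius $r\in\mathcal{R}(p)$ the even number of incident sectors is grouped by the good pairing into pairs such that the unions of any two distinct pairs meet only improperly, cf.\ \Cref{def:goodPairing}. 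The construction is thereby reduced to the combinatorics of how these paired sectors glue to one another across the radii.

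Next I would assemble the sectors into maximal pieces. Glue each paired couple along the radius at which it is paired; since each sector also lies on a second, ``far'' radius that carries its own pair, following the pairings produces a cyclic chain $F_1, F_2, \dots, F_m$ of sectors in which $F_i$ and $F_{i+1}$ are glued along a shared radius, indices read modulo $m$. Because there are finitely many sectors and the good pairing is a perfect matching on each radius, every chain closes up and the chains partition all the sectors. I would then show that the union $\mathcal{D}$ of the sectors in one chain is a disk when the chain meets pairwise distinct radii, and a folded disk otherwise: treating the chain abstractly as a subdivided circle and coning it to $p$ yields a model disk, and the continuous map sending each coned arc to its sector realizes $\mathcal{D}$; whenever a radius is revisited along the chain the corresponding model radii are identified, which is exactly the structure of \Cref{def:foldedDisk}. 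Since every point of $\partial\mathcal{Y}\cap\overline{\mathcal{N}(p)}$ lies in some sector, these finitely many disks and folded disks cover $\partial\mathcal{Y}\cap\overline{\mathcal{N}(p)}$.

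To verify that no two distinct members intersect properly, I would note that distinct pieces share no sector, and that two distinct sectors, being the $2$D cells cut out by radii inside the generalized disks, can meet only along radii by \Cref{lem:localTopoOfYinSets}(b) and \Cref{def:genSector}. Hence two distinct pieces meet only along radii. Along any shared radius $r$ the two pieces occupy distinct pairs of the good pairing on $r$, so \Cref{def:goodPairing} guarantees that their combined sectors have no proper intersection there; this is precisely the absence of a proper intersection between the two pieces. A self-overlap of a single folded disk along a revisited radius is the intended folding and is not a proper intersection of two distinct members. For the in-particular claim, if $p$ is non-vertex non-manifold then, by the proof of \Cref{lem:goodPairingsInNeighborhood}, $\mathcal{N}(p)$ has exactly two radii $r_1, r_2$, each carrying $2n$ sectors, and every sector is bounded by both. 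Using the one-to-one correspondence of \Cref{lem:one2one} I would show the pairing is \emph{matched}, so two sectors paired on $r_1$ are also paired on $r_2$; every chain then has length two and is a digon whose union is a genuine disk in the sense of \Cref{def:genDisk} formed by a single good pair, and no folding occurs.

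I expect the main obstacle to be the second step: certifying that each assembled chain is \emph{literally} a disk or a folded disk as defined, i.e.\ that the coning-and-mapping construction produces a homeomorphic image of $\mathbb{D}^2$ per \Cref{def:genDisk}, or an attaching space whose identifications are exactly of radii per \Cref{def:foldedDisk}. The bookkeeping is delicate because \Cref{cvn:evenNumberOfSectors} introduces artificial radii, so one must track which revisited radii are genuine folds and which are mere artifacts of the convention, and confirm that the resulting objects do not depend on these auxiliary choices.
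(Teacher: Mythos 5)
Your proposal is correct and takes essentially the same route as the paper's proof: fix good pairings of the sectors (the paper cites \Cref{lem:goodPairingsRadius}, you cite \Cref{lem:goodPairingsInNeighborhood}), assemble the sectors into cyclic chains glued along paired radii, classify each chain as a disk or a folded disk according to whether some radius is revisited, and rule out proper intersections between distinct pieces via \Cref{def:goodPairing}. The only notable difference is the ``in particular'' part, where the paper appeals in one line to \Cref{def:foldedDisk} while you argue via \Cref{lem:one2one} that the pairing is matched across the two radii so every chain is a digon --- a more explicit justification of the same fact, not a different approach.
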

\begin{proof}
  By \Cref{lem:goodPairingsRadius}, for each $r\in \mathcal{R}(p)$, 
  we always have a good pairing of sectors on $r$.
  Then the sectors can be arranged to form disks or folded disks:
  for any initial sector $F_0$,  
  the pairing gives a unique sequence
  of sectors $F_0, F_1, \dots, F_{k-1}, F_{k}=F_0$,
  where each sector $F_i$ is on radii $r_i$ and $r_{i+1}$,
  the paired sectors $F_i$ and $F_{i+1}$ in the given good pairing
  satisfies $r_{i+1}=F_i\cap F_{i+1}$, 
  and $F_i\neq F_j$ holds unless $\{i,j\}=\{0,k\}$. 
  As one goes from $F_0$ to $F_k$,
  if there exists one radius which is passed twice,
  then $\bigcup_{0\leq i\leq k}F_i$ is a folded disk.
  Otherwise $\bigcup_{0\leq i\leq k}F_i$ is a disk.
  By construction,
  these disks and folded disks intersect only at radii.
  By \Cref{def:goodPairing},
  no pair of these disks intersect properly.
  Finally, if $p$ is non-vertex non-manifold,
  \Cref{def:foldedDisk} dictates that ${\cal N}(p)$ 
  cannot contain any folded disks.
\end{proof}

\Cref{lem:diskDecomposition} guarantees the existence of 

\begin{definition}[Good disk decomposition]
  \label{def:goodDiskDecomp}
  For a non-manifold point $p\in \partial {\cal Y}$, 
  a \emph{good disk decomposition}
  of $\partial {\cal Y}\cap \overline{\mathcal{N}(p)}$ 
  is a finite set $\mathbf{D}(p):=\{{\cal D}_i: i=1,2,\ldots,m\}$
  of disks and/or folded disks such that
  $\partial {\cal Y}\cap \overline{\mathcal{N}(p)}$ coincides with
  \begin{equation}
    \label{eq:goodDiskDecomp}
    {\cal U}(\mathbf{D}(p)):= \cup_{{\cal D}_i\in \mathbf{D}(p)} {\cal D}_i
  \end{equation}
  where no pairs of disks in $\mathbf{D}(p)$ intersect properly
  and the improper intersection of any two distinct disks
  is either $p$ or a finite number of radii in ${\cal R}(p)$. 
\end{definition}

\begin{definition}[Locally bound]
  \label{def:goodDiskDecompBound}
  A good disk decomposition $\mathbf{D}(p)$ 
  of $\partial {\cal Y}\cap \overline{\mathcal{N}(p)}$
  is said to \emph{locally bound} ${\cal Y}$ or ${\cal Y}^{\perp}$
  if, for any $\mathcal{D}_i\in \mathbf{D}(p)$,
  each sector pair in $\mathcal{D}_i$ is in some good pairing
  within $\mathcal{N}(p)$
  that locally bounds ${\cal Y}$ or ${\cal Y}^{\perp}$, respectively. 
\end{definition}

The uniqueness of the good disk decomposition locally bounding
${\cal Y}$ is stated in

\begin{corollary}[Good disk decomposition of a good neighborhood]
  \label{coro:goodDiskDecomp}
  For a non-manifold point \mbox{$p\in \partial {\cal Y}$},
  the number $n_{\textrm{dd}}$ of different good disk decompositions of ${\cal N}(p)$
  is either 1 or 2.
  If $n_{\textrm{dd}}=1$, the good disk decomposition locally
  bounds both ${\cal Y}$ and ${\cal Y}^{\perp}$.
  If $n_{\textrm{dd}}=2$, 
  the good disk decomposition locally bounds ${\cal Y}$
  while the other locally bounds ${\cal Y}^{\perp}$.
\end{corollary}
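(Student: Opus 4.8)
The plan is to reduce the corollary to \Cref{lem:goodPairingsInNeighborhood} by exhibiting a bijection between the good disk decompositions of ${\cal N}(p)$ and the good pairings within ${\cal N}(p)$. Since \Cref{lem:goodPairingsInNeighborhood} already establishes that the number $n_{\textrm{gp}}$ of good pairings is $1$ or $2$, with a single pairing locally bounding both ${\cal Y}$ and ${\cal Y}^{\perp}$ when $n_{\textrm{gp}}=1$ and the two pairings locally bounding ${\cal Y}$ and ${\cal Y}^{\perp}$ separately when $n_{\textrm{gp}}=2$, it suffices to show $n_{\textrm{dd}}=n_{\textrm{gp}}$ and that the bijection respects the locally-bounding relation. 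The whole dichotomy then transfers verbatim.

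First I would construct the forward map, from good pairings to good disk decompositions. Given a good pairing within ${\cal N}(p)$, I would invoke the chaining argument already used in the proof of \Cref{lem:diskDecomposition}: starting from any sector $F_0$ on a radius, the good pairing determines a unique cyclic sequence $F_0,F_1,\dots,F_k=F_0$ whose union is a disk (if no radius is traversed twice) or a folded disk (if some radius is). By \Cref{lem:diskDecomposition}, the resulting family $\mathbf{D}(p)$ of disks and folded disks has no proper intersections and its members meet pairwise only in $p$ or in radii of ${\cal R}(p)$, so $\mathbf{D}(p)$ is a good disk decomposition in the sense of \Cref{def:goodDiskDecomp}.

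Next I would construct the inverse map, reading a good pairing off a good disk decomposition $\mathbf{D}(p)$. For a fixed radius $r\in{\cal R}(p)$ and each ${\cal D}_i\in\mathbf{D}(p)$ meeting $r$, traversing ${\cal D}_i$ crosses $r$ along one or, in the folded case, several pairs of adjacent sectors; declaring each such adjacent pair to be paired yields a pairing of all sectors on $r$. Because $\mathbf{D}(p)$ has no proper intersections, the unions $F\cup F'$ and $G\cup G'$ of any two of these pairs have no proper intersection, so by \Cref{def:goodPairing} the induced pairing on each $r$ is good, and collecting them over all radii gives a good pairing within ${\cal N}(p)$. The two maps are mutually inverse: re-chaining the pairs read off $\mathbf{D}(p)$ reproduces exactly the cyclic traversals bounding each ${\cal D}_i$, while reading pairs off a chained decomposition returns the pairing one started from. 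Hence $n_{\textrm{dd}}=n_{\textrm{gp}}\in\{1,2\}$.

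To finish, I would transfer the locally-bounding structure. By \Cref{def:goodDiskDecompBound}, $\mathbf{D}(p)$ locally bounds ${\cal Y}$ (resp. ${\cal Y}^{\perp}$) precisely when every sector pair of every ${\cal D}_i$ lies in a good pairing that locally bounds ${\cal Y}$ (resp. ${\cal Y}^{\perp}$); under the bijection this is exactly the condition that the corresponding good pairing locally bounds ${\cal Y}$ (resp. ${\cal Y}^{\perp}$), so the trichotomy of outcomes in \Cref{lem:goodPairingsInNeighborhood} carries over. The hard part will be the inverse map in the presence of folded disks, where one geometric radius can meet a single folded disk in several sector pairs: I must check that reading off these pairs still gives a well-defined good pairing and that re-chaining recovers the \emph{same} folded disk rather than splitting it into two disks. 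This is exactly where the ``radius passed twice'' criterion of \Cref{lem:diskDecomposition}, the no-proper-intersection clause of \Cref{def:goodDiskDecomp}, and the even-sector normalization of \Cref{cvn:evenNumberOfSectors} are needed.
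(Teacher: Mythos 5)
Your proposal is correct and takes essentially the same route as the paper: the paper's own proof is a one-line reduction, asserting that the corollary ``follows from \Cref{lem:goodPairingsInNeighborhood} and \Cref{def:goodDiskDecomp,def:goodDiskDecompBound},'' and your explicit bijection between good pairings and good disk decompositions (chaining via \Cref{lem:diskDecomposition} one way, reading pairs off the decomposition the other way) is precisely the correspondence that this one-liner leaves implicit. The folded-disk subtlety you flag at the end is real but is not addressed by the paper's proof either, so your argument is, if anything, a more careful rendering of the same reduction.
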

\begin{proof}
  This follows from
  \Cref{lem:goodPairingsInNeighborhood}
  and \Cref{def:goodDiskDecomp,def:goodDiskDecompBound}. 
\end{proof}

For the Yin set in \Cref{fig:yinsetexm}(d,e),
 we have $n_{\textrm{dd}}=1$
 for the endpoints of both the horizontal and vertical yellow lines
 and $n_{\textrm{dd}}=2$
 for their intersection $q_X$. 
 


\section{Global topology} 
\label{sec:globalTopology}

To proceed from local topology
 to global topology in \Cref{thm:globalTopology},
 we need two operations on non-manifold points,
 one for non-vertex points in \Cref{def:unfoldingOp}
 and the other for isolated points in \Cref{def:detachingOp}.
 
By \Cref{def:VertexSingularPoint}, 
 the good neighborhood of a non-vertex non-manifold point $p$
 has two and only two radii $r_1$ and $r_2$.
A good disk decomposition $\mathbf{D}(p)$ has no folded disks 
 but disks, whose common intersection is $r=r_1\cup r_2$.

\begin{definition}[Unfolding]
  \label{def:unfoldingOp}
  For a non-vertex non-manifold point $p\in\partial {\cal Y}$, 
  the \emph{$\epsilon$-unfolding operation
    for a good disk decomposition} $\mathbf{D}(p)$
  of $\partial {\cal Y}\cap \overline{\mathcal{N}(p)}$ 
  is a continuous function
  $f: {\cal D}_{\sqcup}(p) \times [0,1]
  \rightarrow \overline{\mathcal{N}(p)}$
  where ${\cal D}_{\sqcup}(p):=
  \sqcup_{{\cal D}_i\in\mathbf{D}(p)} {\cal D}_i$ 
  with $\sqcup$ given in (\ref{eq:topologicalSum}) 
  and 
  \begin{enumerate}[(a)]
  \item $f({\cal D}_{\sqcup}(p) \times {0}) = {\cal U}(\mathbf{D}(p))$ 
    where ${\cal U}$ is given in (\ref{eq:goodDiskDecomp}); 
  \item for any $t \in (0,1]$,
    each ${\cal D}_i$ 
    and its image under $f|_{{\cal D}_i \times \{i\}\times \{t\}}$
    are homeomorphic; 
  \item $f({\cal D}_{\sqcup}(p) \times {1}) = {\cal U}(\tilde{\mathbf{D}}(p))$,
    where $\tilde{\mathbf{D}}(p) = \{\tilde{\cal D}_i\}$
    is a new set of disks such that
    (i) the restriction
    $f|_{\partial{\cal D}_i \times \{i\}\times\{t\}}$
    is an identity where $\partial{\cal D}_i$
    is defined in \Cref{def:genDisk}, 
    (ii) $\tilde{\cal D}_i\cap {\cal N}(p)$'s are pairwise disjoint,
    (iii) the disks in $\tilde{\mathbf{D}}(p)$
    have a one-to-one correspondence to those in $\mathbf{D}(p)$
    and, (iv) 
    the maximal distance of corresponding points
    in ${\cal U}(\mathbf{D}(p))$ and ${\cal U}(\tilde{\mathbf{D}}(p))$
    is no greater than $\epsilon$.
  \end{enumerate}
\end{definition}

The above unfolding operation is well defined
 since there always exists a function $f$
 that continuously deforms each disk ${\cal D}_i$
 to $\tilde{\cal D}_i$ in $\mathbb{R}^3$.
Furthermore, the unfolding can be performed one disk at a time
 so that no new intersections are introduced; 
 it can also be confined
 within an arbitrarily small neighborhood of the radius $r$
 so that the max-norm error of approximating
 each ${\cal D}_i$ with $\tilde{\cal D}_i$
 is no greater than $\epsilon$.
Consequently, 
 the unfolding operation eliminates 
 all non-vertex non-manifold points in ${\cal N}(p)$, 
 except those on $\partial{\cal N}(p)$. 
To understand (c)(i), 
 consider a vertex non-manifold point $p_v$
 of the Yin set ${\cal Y}$ in \Cref{fig:yinsetexm}(b)
 and a good neighborhood ${\cal N}(q)$
 of a non-vertex non-manifold point $q$
 close to $p_v$.
For ${\cal N}(q)$ satisfying $p_v\in\partial{\cal N}(q)$, 
 we \emph{cannot} separate $p_v$ into multiple points
 because $p_v$ is the \emph{single} center of a folded disk. 
Indeed, 
 a folded disk in ${\cal N}(q)$ should be transformed to a disk.

  
\begin{definition}[Detaching]
   \label{def:detachingOp}
  For an isolated non-manifold point $p\in\partial {\cal Y}$, 
  the \emph{$\epsilon$-detaching operation for a good disk decomposition} $\mathbf{D}(p)$
  of $\partial {\cal Y}\cap \overline{\mathcal{N}(p)}$ 
  is a continuous map
  $f: {\cal D}_{\sqcup}(p) \times [0,1] \rightarrow \overline{\mathcal{N}(p)}$
  satisfying (a,b,c) in \Cref{def:unfoldingOp}. 
\end{definition}

Different from those in \Cref{def:unfoldingOp},
 $\partial {\cal D}_i$'s of $\mathbf{D}(p)$ in \Cref{def:detachingOp}
 are pairwise disjoint
 and thus condition (c)(ii) implies that
 the disks in $\tilde{\mathbf{D}}(p)$ are pairwise disjoint.
In addition,
 the sequential perturbation of detaching one disk at a time
 is confined within an arbitrarily small neighborhood of $p$
 since there are no radii in ${\cal N}(p)$.

\begin{theorem}[Global topology of 3D Yin sets]
  \label{thm:globalTopology}
  The boundary of any Yin set ${\cal Y}\ne \emptyset, \mathbb{R}^3$
  is homeomorphic
  to the gluing of a finite collection of surfaces 
  along a 1D CW complex. 
\end{theorem}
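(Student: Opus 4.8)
The plan is to build the required global decomposition by patching together the local good disk decompositions established in the local-topology section and then resolving the singular set, which by \Cref{lem:CW-complex} is already a 1D CW complex.

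First I would record the structure of the singular set. By \Cref{lem:CW-complex}(a) there are only finitely many isolated non-manifold points, and by \Cref{lem:CW-complex}(b) the non-manifold points form a 1D CW complex $K_1$ homeomorphic to a graph $G=(V,E,\phi_G)$, where $V$ consists of the vertex non-manifold points and $E$ of the connecting curves. This $K_1$ (together with the finitely many isolated points and, if desired, the manifold singular curves) will serve as the target 1D CW complex along which the surfaces get glued. The task is then to produce a finite collection of surfaces (orientable compact 2-manifolds in the sense of \Cref{def:surface}) and an attaching map onto $K_1$ whose attaching space is homeomorphic to $\partial{\cal Y}$.

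Next I would construct the surfaces by a global ``unfolding/detaching'' procedure. For each non-manifold point I invoke \Cref{lem:diskDecomposition} to write $\partial{\cal Y}\cap\overline{\mathcal{N}(p)}$ as a union of disks and folded disks meeting only along radii with no proper intersections, and I fix the canonical good disk decomposition guaranteed by \Cref{coro:goodDiskDecomp} (choosing, say, the one that locally bounds ${\cal Y}$, so the local choices are mutually consistent). The key compatibility is \Cref{lem:one2one}: whenever two good neighborhoods overlap along a shared radius, the sectors-and-disks on one side are in canonical bijection with those on the other, so the local decompositions glue into a globally well-defined family of disk pieces over all of $\partial{\cal Y}$. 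Away from the singular set $\partial{\cal Y}$ is a manifold, so these disk pieces assemble into finitely many compact 2-manifold-with-boundary patches; applying the $\epsilon$-unfolding operation of \Cref{def:unfoldingOp} along non-vertex radii and the $\epsilon$-detaching operation of \Cref{def:detachingOp} at isolated points separates the patches into genuine surfaces whose boundary circles sit over the cells of $K_1$. After closing up these patches (the unfolding sends folded disks to disks and makes corresponding boundaries disjoint, exactly as required by condition (c) of \Cref{def:unfoldingOp}), I would check via \Cref{thm:manifoldclassification} that each resulting closed piece is a sphere or connected sum of tori, hence a surface, and that orientability is preserved because the unfolding/detaching maps are homeomorphisms on each disk.

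Finally I would exhibit the attaching map. The inverse of the unfolding/detaching deformation $f(\cdot\times\{1\})\mapsto f(\cdot\times\{0\})$ on each surface records precisely which boundary points must be re-identified, and collecting these over all singular points yields a continuous $f$ from the disjoint union of surface boundaries onto $K_1$ that identifies points according to the good pairing; by \Cref{def:attaching-spaces} the associated attaching space is homeomorphic to the original $\partial{\cal Y}$ because, by \Cref{def:unfoldingOp}(b,c), the deformation is a homeomorphism on each piece and only collapses the separations we introduced. The main obstacle I anticipate is the \emph{global consistency of the local choices}: \Cref{coro:goodDiskDecomp} gives a binary choice at each non-manifold point, and one must verify that selecting the ${\cal Y}$-bounding decomposition everywhere is coherent along overlapping good neighborhoods so that the unfolded surfaces match up and no orientation clash or spurious proper intersection is created at vertex non-manifold points (the stars and folded disks of \Cref{def:VertexSingularPoint} and \Cref{def:foldedDisk}). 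Handling the vertex points — where several radii meet and a single folded disk may not split — is where \Cref{lem:goodPairingsInNeighborhood} and the careful covering argument in its proof (good neighborhoods intersecting in 3-manifolds) will have to be leveraged to propagate the local bijections of \Cref{lem:one2one} consistently across the whole complex.
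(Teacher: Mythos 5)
Your proposal follows essentially the same route as the paper's proof: both use \Cref{lem:CW-complex} to identify the 1D CW complex of non-manifold points, fix the unique good disk decompositions locally bounding ${\cal Y}$ (\Cref{coro:goodDiskDecomp}), propagate consistency across overlapping good neighborhoods via \Cref{lem:one2one}, eliminate singular points with the unfolding and detaching operations of \Cref{def:unfoldingOp,def:detachingOp}, and recover $\partial{\cal Y}$ by inverting these operations as the gluing along the CW complex. The only quibble is your appeal to \Cref{thm:manifoldclassification} to certify the resulting pieces are surfaces --- that theorem presupposes a surface rather than detects one; the paper instead concludes this directly from compactness of $\partial{\cal Y}$ and the fact that every point of the unfolded space has a disk neighborhood.
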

\begin{proof}
  By \Cref{lem:CW-complex}(b),
  the non-manifold points of $\partial {\cal Y}$
  form a 1D CW complex. 
  Consider a maximally connected component $l$ of this 1D CW complex.
  If $l$ only consists of a single isolated non-manifold point,
  the conclusion holds trivially.
  Otherwise denote by $V$
  the set of all vertex non-manifold points in $l$.
  It is straightforward to select a set $P=\{p_i\in l: i=1,2,\ldots,m\}$
  of non-vertex non-manifold points
  and a good neighborhood for each point in $P$
  such that
  \begin{enumerate}[(a)]
  \item $i\ne j$ $\implies$ $p_{i}\not\in \mathcal{N}(p_{j})$,
    i.e., $p_{i}$ is contained only in $\mathcal{N}(p_{i})$; 
  \item $v\in V$ $\implies$ $v\not\in \cup_{i=1}^m\mathcal{N}(p_i)$; 
  \item for each radius $r$ at $v\in V$,
    $v\in \partial {\cal N}(p_i)$ and 
    $(r\setminus v) \subset {\cal N}(p_i)$ hold
    for some $p_i\in P$; 
  \item $l\subset \cup_{i=1}^m\overline{\mathcal{N}(p_i)}$.
  \end{enumerate}
  
  (b) states that no vertex non-manifold point is contained
  in any of the good neighborhoods
  while (c) states that each non-vertex non-manifold point
  in ${\cal N}(v)$ is contained in some ${\cal N}(p_i)$; 
  together they imply $(l\setminus V)\subset \cup_{i=1}^m\mathcal{N}(p_i)$.
  See \Cref{fig:nonvertexNeighborhoods}
  for an example of selecting $P$ and good neighborhoods
  of points in $P$. 
  By \Cref{coro:goodDiskDecomp},
  for each ${\cal N}(p_i)$ and each ${\cal N}(v)$,
  the good disk decomposition that locally bounds ${\cal Y}$
  is unique.
  By (d) and \Cref{lem:one2one},
  any two adjacent disk decompositions,
  either non-vertex/non-vertex or vertex/non-vertex, 
  are compatible.
  
  By \Cref{def:unfoldingOp},
  the application of the unfolding operation
  to each good disk decomposition in ${\cal N}(p_i)$ eliminates
  all non-vertex non-manifold point of ${\cal Y}$.
  It also converts each vertex non-manifold point of $l$
  to either an isolated non-manifold point or a manifold point;
  an example of the latter case is some $v\in V$
  where the good disk decomposition of ${\cal N}(v)$ 
  only consists of folded disks. 
  Now that all non-manifold points of $\partial \mathcal{Y}$
  are isolated,
  the application of the detaching operation in \Cref{def:detachingOp}
  to the good disk decomposition in the good neighborhood
  of each isolated non-manifold point
  yields a surface in \Cref{def:surface}. 

  Apply the above process to each connected component
  of non-manifold points of $\partial {\cal Y}$
  and we obtain a finite number of surfaces.
  Denote by $ \partial \tilde{\cal Y}$ the resulting space, 
  where each point $p\in \partial \tilde{\cal Y}$
  has a neighborhood homeomorphic to an open disk in $\mathbb{R}^2$.
  Since $\partial {\cal Y}$ is compact,
  $ \partial \tilde{\cal Y}$ is also compact.
  Therefore $ \partial \tilde{\cal Y}$
  is the union of a finite number of disjoint surfaces.
  Finally, the unfolding and detaching operations can be undone
  by first gluing at points in $V$
  and then gluing along the edges in the 1D CW complex.
\end{proof}

\begin{figure}
  \centering 
  \includegraphics[width=0.43\textwidth]{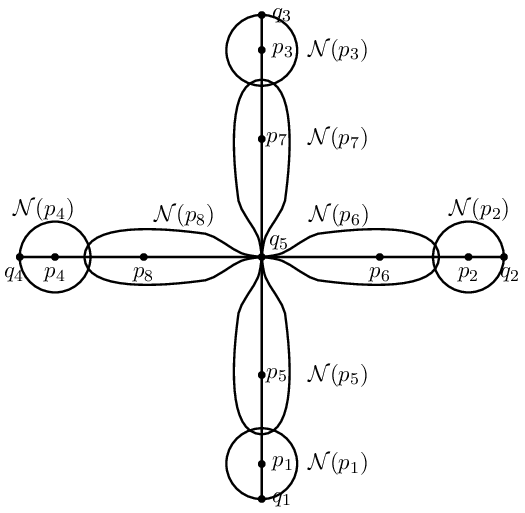}
  \caption{Illustrating the proof of \Cref{thm:globalTopology}
    with the Yin set ${\cal Y}$ in \Cref{fig:yinsetexm}(d,e).
    Each point in $\{q_i: i=1,2,3,4,5\}$
    is a vertex non-manifold point of ${\cal Y}$
    while each point in $\{p_i: i=1,\ldots,8\}$
    is a selected non-vertex non-manifold point.
    For each $i=1,2,3,4$,
    we have $q_i\in \partial{\cal N}(p_i)$ and 
    $q_i\not\in {\cal N}(p_i)$;
    the fact of ${\cal N}(p_i)\cap {\cal N}(p_{i+4})\ne \emptyset$
    implies the compatibility of their good disk decompositions.
    For each $j=5,6,7,8$,
    we have $q_5\in \partial N(p_j)$ and 
    $q_i\not\in {\cal N}(p_j)$;
    the fact of ${\cal N}(p_j)\cap {\cal N}(q_{5})\ne \emptyset$
    implies the compatibility of the good disk decomposition
    in ${\cal N}(q_{5})$ with that in each ${\cal N}(p_j)$.
  }
  \label{fig:nonvertexNeighborhoods}
\end{figure}

\section{Boundary representation}
\label{sec:representation}

\Cref{thm:globalTopology} leads to
 a unique B-rep of Yin sets
 in \Cref{thm:connectedYinsetrep}, 
 which states that the boundary of a 3D Yin set 
 can be uniquely decomposed
 into a collection of basic building blocks called glued surfaces. 
  

\begin{definition}[Glued surface]
  \label{def:gluedsurface}
  A \emph{glued surface} is a quotient space of a surface 
  such that
  (i) the quotient map glues the surface along 
  a 1D CW complex, 
  and (ii)
  the complement of the glued surface in $\mathbb{R}^3$
  has exactly two connected components,
  one bounded and the other unbounded.
\end{definition}

Condition (i) identifies the orientation of a glued surface 
 with that of the original surface
 while condition (ii) preserves the key property
 that the original surface has 
 a bounded complement and an unbounded complement in $\mathbb{R}^3$, 
 which is crucial in the unique boundary representation of Yin sets.
Then 
\Cref{def:intersectionsOfGluedSurfs,def:orientationOfSurface,def:interiorOfSurface} generalize
 to glued surfaces in a straightforward manner. 

\begin{definition}[Almost disjoint glued surfaces]
\label{def:almostDisjointGsurf}
 Two glued surfaces are \emph{almost disjoint}
  if they have no proper intersections
  and their improper intersections,
  cf. \Cref{def:intersectionsOfGluedSurfs},
  only consist of a finite number of isolated points and disjoint curves.
\end{definition}

\begin{definition}[Orientations of a glued surface]
  \label{def:orientationOfGluedSurface}
  A glued surface is \emph{positively oriented}
  if the normal vector $\mathbf{n}$ of the original surface
  always points from its bounded complement 
  to its unbounded complement;
  otherwise it is \emph{negatively oriented}.
\end{definition}


\begin{definition}[Internal complement of a glued surface]
  \label{def:interiorOfGluedSurface}
  The \emph{internal complement of a glued surface} $\mathcal{S}$,
  written $\mathrm{int}(\mathcal{S})$,
  is its bounded complement
  if $\mathcal{S}$ is positively oriented; 
  otherwise it is the unbounded complement.
\end{definition}

The \emph{external complement of a glued surface} $\mathcal{S}$
is the connected component of $\mathbb{R}^3\setminus{\cal S}$
that is not $\mathrm{int}(\mathcal{S})$.

\begin{definition}[Inclusion of glued surfaces]
  \label{def:inclusion}
  A glued surface $\mathcal{S}_k$ is said to \emph{include} $\mathcal{S}_l$,
  written $\mathcal{S}_k \ge \mathcal{S}_l$ or
  $\mathcal{S}_l \le \mathcal{S}_k$, if and only if
  the bounded complement of $\mathcal{S}_l$ is a subset
  of that of $\mathcal{S}_k$.
  If $\mathcal{S}_k$ includes $\mathcal{S}_l$ and
  $\mathcal{S}_k \neq \mathcal{S}_l$,
  we write $\mathcal{S}_k > \mathcal{S}_l$ or
  $\mathcal{S}_l < \mathcal{S}_k$.
\end{definition}

Equipped with the above partial ordering, 
 any collection of glued surfaces
 can be considered as
 a partially ordered set (poset) of glued surfaces. 

\begin{definition}[Covering of glued surfaces]
  \label{def:covering}
  Let $\mathcal{G}$ denote a poset of glued surfaces with inclusion
  as the partial order.
  We say $\mathcal{S}_k$ \emph{covers} $\mathcal{S}_l$ in $\mathcal{G}$
  and write "$\mathcal{S}_k \succ \mathcal{S}_{l}$" or
  "$\mathcal{S}_l \prec \mathcal{S}_k$" if
  $\mathcal{S}_l < \mathcal{S}_{k}$ and no elements
  $\mathcal{S} \in \mathcal{G}$ satisfy
  $\mathcal{S}_l < \mathcal{S} < \mathcal{S}_k$. 
\end{definition}


\begin{definition}[Glued-surface decomposition]
  \label{def:gluedSurfDecomp}
  A \emph{glued-surface decomposition
    of the boundary of a connected Yin set} ${\cal Y}\ne \emptyset, \mathbb{R}^3$
  is a poset ${\cal G}_{\partial {\cal Y}}=\{{\cal S}_j\}$
  of pairwise almost disjoint glued surfaces such that
  $\partial{\cal Y}
  = \cup_{{\cal S}_j\in {\cal G}_{\partial{\cal Y}}} {\cal S}_j$.
\end{definition}

By \Cref{lem:CW-complex}(b),
the set $Q$ of all non-manifold points of $\partial {\cal Y}$
form a 1D CW complex homeomorphic to a graph $G=(V,E,\phi_G)$, 
where each vertex $v\in V$ is either an isolated non-manifold point
or a vertex non-manifold point of $\partial {\cal Y}$
and $E$ contains the curves as the maximally connected subsets
of $Q\setminus V$.

\begin{definition}[Algorithm of glued-surface decomposition]
  \label{def:gluedSurfAlgorithm}
  Given a connected Yin set ${\cal Y}\ne \emptyset, \mathbb{R}^3$
  and the graph $G=(V,E,\phi_G)$ homeomorphic to the 1D CW complex
  of non-manifold points on $\partial{\cal Y}$, 
  the \emph{algorithm of glued-surface decomposition}
  is the following sequence of steps
  that construct a finite collection ${\cal G}_{\partial {\cal Y}}$
  of 2D subsets of $\mathbb{R}^3$.
  
  Firstly, we decompose $\partial {\cal Y}$
   to a set ${\cal G}_{cc}$ of surfaces and surfaces with boundary.
  \begin{enumerate}[(CC.1)]
  \item For each isolated non-manifold point $p\in V$, 
    apply the detaching operation in \Cref{def:detachingOp}
    to the good disk decomposition in $\overline{{\cal N}(p)}$. 
  \item For each curve $\gamma\in E$ with $m>2$ sectors on it,
    cut $\partial {\cal Y}$ open along $\gamma$, 
    i.e., replace $\gamma$ with $m$ pairwise disjoint curves
    $\gamma_1, \gamma_2, \ldots, \gamma_m$
    so that, for any $i\ne j$,
    the $i$th and the $j$th sectors on $\gamma$
    are now on disjoint radii $\gamma_i$ and $\gamma_j$. 
  \item Each maximally connected component of $\partial \tilde{\cal Y}$
    that results from (CC.2)
    is either a surface or a surface with boundary, 
    because all non-manifold points have been removed.
    Hereafter we denoted by ${\cal G}_{cc}$ the set of these
    components. 
  \end{enumerate}

  Second, we initialize ${\cal G}_{\partial {\cal Y}}$
  with all glued surfaces in ${\cal G}_{cc}$, 
  remove them from ${\cal G}_{cc}$, 
  and write ${\cal G}_{cc}=\{P_j\}$
  where each $P_j$ is a surface with boundary.
  
  Lastly, we finish the construction of ${\cal G}_{\partial {\cal Y}}$
  by steps as follows. 
  \begin{enumerate}[(GS.1)]
  \item Form a connected 2D subset $P_{GS}$ (with empty boundary)
    from ${\cal G}_{cc}$: 
    \begin{enumerate}[(a)]
    \item remove an arbitrary $P_j\in{\cal G}_{cc}$
      from ${\cal G}_{cc}$ as the starting point; 
    \item locate a boundary curve 
      $\gamma\subset \partial P_j$ and 
      find the unique $P_i\in{\cal G}_{cc}$
      satisfying that $(P_i,P_j)$ is a good pair on $\gamma$
      and locally bounds ${\cal Y}$; 
    \item glue $P_i$ to $P_j$ along $\gamma$, 
      remove $P_i$ from ${\cal G}_{cc}$,
      and update $P_j$ with $P_i\cup P_j$;
    \item repeat (b,c) until the boundary of $P_j$ becomes empty
      and set $P_{GS}=P_j$. 
    \end{enumerate}
  \item Decompose $P_{GS}$ into a finite collection
    $\mathbf{S}_{GS}=\{{\cal S}_j\}$ of 2D subsets of $\mathbb{R}^3$:
    \begin{enumerate}[(a)]
    \item cut $P_{GS}$ open along curves in $E$
      in the same way
      as that of (CC.2) to obtain a collection of surfaces with boundary;
    \item glue these surfaces with boundary
      to form $\mathbf{S}_{GS}$, 
      a collection of 2D subsets with empty boundary,
      by the same substeps in (GS.1a--d)
      except that here a good pairing $(P_i,P_j)$ on $\gamma$
      is required to locally bound ${\cal Y}^{\perp}$ instead of
      ${\cal Y}$; 
    \end{enumerate}
  \item Update ${\cal G}_{\partial {\cal Y}}$
    with ${\cal G}_{\partial {\cal Y}}\cup \mathbf{S}_{GS}$; 
  \item Repeat (GS.1--3) until ${\cal G}_{cc}$ is empty.
  \end{enumerate}
\end{definition}

\begin{figure}
  \label{fig:uniquerep}
  \centering
  \subfigure[The Yin set ${\cal Y}$]{
    \includegraphics[width=0.45\textwidth]{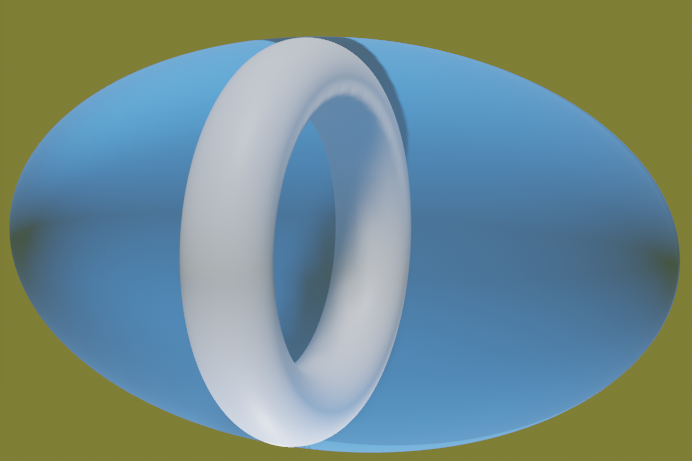}
  }
  \hfill  
  \subfigure[A spanwise cross section of $P_{GS}$]{
    \includegraphics[width=0.45\textwidth]{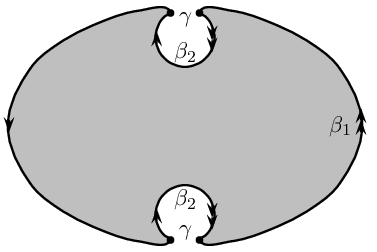}
  }
  
  \subfigure[Cross sections of ${\cal S}_j$'s in $\mathbf{S}_{GS}$]{
    \includegraphics[width=0.35\textwidth]{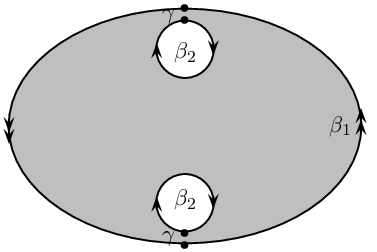}
  }
  \hfill
  \subfigure[${\cal G}_{\partial{\cal Y}}$ constructed
  by \Cref{def:gluedSurfAlgorithm}]{
    \includegraphics[width=0.55\textwidth]{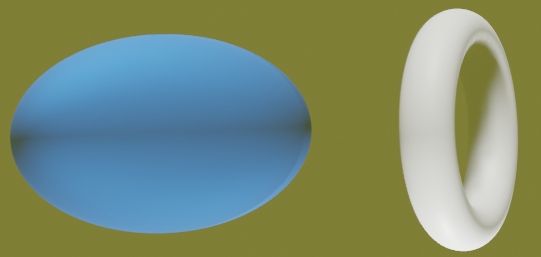}
  }
  \caption{Illustrating the algorithm of glued-surface decomposition 
    in \Cref{def:gluedSurfAlgorithm}.
    The Yin set ${\cal Y}$ is obtained
    by removing a solid torus from an elliptical ball
    so that all non-manifold points on $\partial {\cal Y}$
    form a Jordan curve,
    which is also the only (improper) intersection
    of the ellipsoid and the torus.
    After the execution of (CC.1--3),
    ${\cal G}_{cc}$ contains three surfaces with boundary $P_1$, $P_2$, $P_c$, 
    where $P_1$ and $P_2$ are homeomorphic to the unit half sphere
    and $P_c$ to the cylinder $\mathbb{S}^1\times[0,1]$.
    Before (GS.1), 
    ${\cal G}_{\partial {\cal Y}}$ is still empty
    because (CC.1--3) yield no glued surfaces.
    If the half sphere $P_1$ is chosen
    as the starting point in (GS.1a),
    then $P_{GS}$ is formed as
    $P_1\leftrightarrow P_c \leftrightarrow P_2$
    where $\leftrightarrow$ denotes the gluing.
    As shown in subplot (b),
    $\mathbb{R}^3\setminus P_{GS}$ has three connected components,
    two bounded and one unbounded. 
    In (GS.2), $P_{GS}$ is decomposed
    into two 2D subsets with empty boundary,
    whose cross sections are shown in subplot (c).
    They turn out to be the two surfaces shown in subplot (d).
  }
\end{figure}

We emphasize that the steps (GS.2--4) are necessary
 because $P_{GS}$ needs not to be a glued surface; 
 see the example shown 
 in \Cref{fig:uniquerep} to illustrate \Cref{def:gluedSurfAlgorithm}.
 
\begin{lemma}[Unique glued-surface decomposition of 3D Yin sets]
  \label{lem:gluedSurfDecomp}
  For any connected Yin set
  ${\cal Y}\ne \emptyset,\mathbb{R}^3$, 
  the output ${\cal G}_{\partial {\cal Y}}$
  of the algorithm in \Cref{def:gluedSurfAlgorithm}
  is the unique glued-surface decomposition of
  $\partial{\cal Y}$ as in \Cref{def:gluedSurfDecomp}.
\end{lemma}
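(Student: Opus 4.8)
The plan is to establish the two halves of the statement separately: first that the algorithm of \Cref{def:gluedSurfAlgorithm} terminates and outputs a \emph{valid} glued-surface decomposition in the sense of \Cref{def:gluedSurfDecomp}, and second that \emph{any} such decomposition must coincide with this output. Throughout I write $Q$ for the non-manifold locus of $\partial{\cal Y}$ and, by \Cref{lem:CW-complex}(b), regard it as a finite $1$D CW complex homeomorphic to a graph $G=(V,E,\phi_G)$.

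For existence I would first note that finiteness of $Q$ bounds the number of isolated points, curves, and vertices to be processed, so (CC.1--3) terminate and the loop (GS.1--4) halts because ${\cal G}_{cc}$ strictly decreases at each pass. I then verify that each output piece satisfies the three requirements of \Cref{def:gluedsurface}. Condition (i) is immediate: every piece is assembled from surfaces with boundary glued only along curves of $E\subset Q$, hence is a quotient of an orientable compact $2$-manifold along a $1$D CW complex; emptiness of the boundary is enforced by the stopping criteria in (GS.1d) and (GS.2b). The crux is condition (ii), that the complement in $\mathbb{R}^3$ has exactly two components. The pieces taken directly from ${\cal G}_{cc}$ are honest surfaces, so the Jordan--Brouwer separation theorem for surfaces (discussed after \Cref{thm:manifoldclassification}) applies at once. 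For the pieces built in (GS.2), I would argue that regluing along every $\gamma\in E$ by the good pairing locally bounding ${\cal Y}^{\perp}$ — unique by \Cref{coro:goodDiskDecomp} — keeps the external complement connected, since it is the single ${\cal Y}^{\perp}$-region pinched at each gluing curve, while the bounded side stays connected; hence exactly two complement components. Finally, the good-pairing construction admits no proper intersections, so the pieces are pairwise almost disjoint by \Cref{def:almostDisjointGsurf}, and since cutting and gluing only reorganize incidences without adding or deleting points of $\mathbb{R}^3$, the union of the output equals $\partial{\cal Y}$.

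For uniqueness, let ${\cal G}'=\{{\cal S}'_k\}$ be any decomposition as in \Cref{def:gluedSurfDecomp}, and call the connected components of $\partial{\cal Y}\setminus Q$ the \emph{faces}. The first step is that each face lies in exactly one ${\cal S}'_k$: two almost-disjoint glued surfaces meet only in finitely many isolated points and curves, so they cannot share a $2$D face. Thus ${\cal G}'$ induces a partition of the faces, and it suffices to show this partition is forced. The second step is local: along any radius $r$ the sectors belonging to a fixed ${\cal S}'_k$ must be paired so that ${\cal S}'_k$ carries no boundary there, and the absence of proper intersections forces this to be a good pairing in the sense of \Cref{def:goodPairing}. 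By \Cref{lem:goodPairingsInNeighborhood} there are at most two such good pairings at each non-manifold point, one locally bounding ${\cal Y}$ and the other ${\cal Y}^{\perp}$, so the only remaining freedom is a binary choice at each radius.

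The decisive third step, which I expect to be the main obstacle, is to show that requiring each ${\cal S}'_k$ to have exactly two complement components selects the very pairing produced by the algorithm — the ${\cal Y}^{\perp}$-bounding good pairing refined within each maximal ${\cal Y}$-bounding component $P_{GS}$. The idea is that a consistent global orientation of ${\cal S}'_k$ (\Cref{def:orientationOfGluedSurface}) forces the external complement to agree across adjacent faces, whereas, as the example after \Cref{fig:uniquerep} illustrates, adopting the alternative ${\cal Y}$-pairing would pinch a ${\cal Y}$-chunk and split the complement into three or more components, violating \Cref{def:gluedsurface}(ii). Propagating this constraint along the connected graph $G$, using \Cref{lem:one2one} to guarantee that the choices at neighboring radii are compatible, I would conclude that the induced partition of faces is precisely the one computed by (GS.1--4). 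Hence ${\cal G}'={\cal G}_{\partial{\cal Y}}$, and the glued-surface decomposition is unique.
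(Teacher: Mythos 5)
Your proposal and the paper's proof part ways on what ``unique'' means, and the half you add on top of the paper contains a genuine error. The paper proves only that the algorithm's output is well defined and valid: (i) every choice in \Cref{def:gluedSurfAlgorithm} is forced, by \Cref{lem:goodPairingsRadius}; (ii) each output piece satisfies \Cref{def:gluedsurface}; (iii) the pieces are pairwise almost disjoint. It never argues that an arbitrary decomposition in the sense of \Cref{def:gluedSurfDecomp} must coincide with the output, which is the extra claim your second half attempts. Your existence half follows the paper's outline, but it is thin exactly where the paper does the real work: to show an output piece ${\cal S}_j$ has exactly two complement components, the paper uses the connectedness of ${\cal Y}$ and \Cref{lem:localTopoOfYinSets}(c) to place ${\cal Y}$ \emph{together with all of} $\partial{\cal Y}\setminus{\cal S}_j$ (i.e., the other pieces) in a single component $C_{\cal Y}$ of $\mathbb{R}^3\setminus{\cal S}_j$, and then uses the ${\cal Y}^{\perp}$-bounding condition of (GS.2b) to show the remaining points of ${\cal Y}^{\perp}$ cannot populate more than one further component. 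Your sketch (``the external complement \ldots is the single ${\cal Y}^{\perp}$-region \ldots while the bounded side stays connected'') never accounts for where the \emph{other} glued surfaces of the decomposition lie, and does not keep track of which side is bounded (for the torus of \Cref{fig:uniquerep} the ${\cal Y}^{\perp}$-side is the bounded one), so this step does not go through as written.

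The decisive gap is your third uniqueness step. The claim that ``adopting the alternative ${\cal Y}$-pairing would pinch a ${\cal Y}$-chunk and split the complement into three or more components'' is false in general; it over-generalizes from the one example in the paper. Consider the Yin set of \Cref{fig:yinsetexm}(b): an open set homeomorphic to a ball whose boundary touches itself along an arc, so that $\partial{\cal Y}$ is a sphere with two arcs identified and $\partial{\cal Y}$ minus its non-manifold points is a \emph{single} face. There, \emph{both} good pairings at the non-manifold arc reassemble that face into the \emph{same} point set $\partial{\cal Y}$, whose complement has exactly two components (${\cal Y}$ and ${\cal Y}^{\perp}$ are each connected): one pairing exhibits $\partial{\cal Y}$ as a quotient of $\mathbb{S}^2$, the other as a quotient of a torus, and both satisfy \Cref{def:gluedsurface}. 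Hence condition (ii) of \Cref{def:gluedsurface} does \emph{not} select a pairing; at best uniqueness can force the induced partition of $\partial{\cal Y}$ into subsets of $\mathbb{R}^3$, never the pairing/quotient structure, and your argument conflates the two. Since your entire uniqueness argument consists of propagating a ``forced pairing'' along the graph $G$, it collapses at this point. A correct argument would have to work with the face partition directly and exclude that a different mixture of pairings across different radii also yields pairwise almost disjoint glued surfaces covering $\partial{\cal Y}$ --- something neither your proposal nor, in fact, the paper's proof establishes, the paper having sidestepped the issue by the weaker, construction-is-well-defined reading of uniqueness.
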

\begin{proof}
  We need to prove 
  (i) the construction of ${\cal G}_{\partial {\cal Y}}$ is unique, 
  (ii) each ${\cal S}_j\in {\cal G}_{\partial {\cal Y}}$
  is a glued surface,
  and (iii) the glued surfaces in ${\cal G}_{\partial {\cal Y}}$
  are pairwise almost disjoint.

  (i) follows from the uniqueness of the algorithmic steps
  in \Cref{def:gluedSurfAlgorithm}.
  In particular,
  the uniqueness of (GS.1b) and that of (GS.2b)
  follow from \Cref{lem:goodPairingsRadius}.

  To prove (ii), we start by observing that
  each element in ${\cal G}_{cc}$ is either a surface
  or a surface with boundary.
  Although $P_{GS}$ produced by (GS.1) has no boundary, 
  it may violate condition (ii) in \Cref{def:gluedsurface}
  and thus fail to be a glued surface.
  Nonetheless,
  by ${\cal S}_j\subset P_{GS}\subset \partial {\cal Y}$,
  the condition of locally bounding ${\cal Y}$ in (GS.1b),
  and the connectedness of ${\cal Y}$,
  one and \emph{only} one connected component $C_{\cal Y}$
  of $\mathbb{R}^3\setminus P_{GS}$ contains ${\cal Y}$.
  By \Cref{lem:localTopoOfYinSets}(c),
  $\partial{\cal Y}\setminus {\cal S}_j$
  must also be contained in $C_{\cal Y}$. 
  On the other hand, 
  ${\cal Y}^{\perp}$ cannot be distributed
  in multiple connected components
  of $\mathbb{R}^3\setminus {\cal S}_j$
  other than $C_{\cal Y}$,
  because that would contradict
  the condition of locally bounding ${\cal Y}^{\perp}$ in (GS.2b).
  For any point $p\not\in {\cal S}_j$,
  one and only one of $p\in {\cal Y}$, $p\in {\cal Y}^{\perp}$,
  and $p\in \partial{\cal Y}\setminus {\cal S}_j$ holds.
  Therefore, 
  $\mathbb{R}^3\setminus {\cal S}_j$
  only consists of two components,
  one containing ${\cal Y}$ and the other being contained in ${\cal Y}^{\perp}$.
  Thus by \Cref{def:gluedsurface} 
  ${\cal S}_j$ is indeed a glued surface. 

  (iii) follows from \Cref{def:almostDisjointGsurf}
  and the fact of all improper intersections of the glued surfaces
  being preserved,
  i.e., each detaching or cutting-open operation
  is matched with a gluing operation.
\end{proof}

The main result of this section is
\begin{theorem}[Unique B-rep of connected Yin sets]
  \label{thm:connectedYinsetrep}
  The boundary $\partial {\cal Y}$
  of any connected Yin set
  ${\cal Y}\ne \emptyset,\mathbb{R}^3$ 
  can be uniquely decomposed into a finite poset
  \mbox{$\mathcal{G}_{\partial \mathcal{Y}}=\{\mathcal{S}_j\subset \partial {\cal Y}\}$}
  of pairwise almost disjoint oriented glued surfaces
  such that 
  \begin{equation}
    \label{equ:connectedYinsetrep}
  \mathcal{Y} =
  \mathop{\bigcap}\limits_{\mathcal{S}_{j}\in \mathcal{G}_{\partial \mathcal{Y}}}
  \mathrm{int}(\mathcal{S}_{j}).
\end{equation}
Furthermore, the poset $\mathcal{G}_{\partial \mathcal{Y}}$
must be one of the following two types,
\begin{equation}
  \label{equ:setgsurfacetype}
   \left\{
    \begin{array}{lr}  
      \mathcal{G}^{-} = \left\{ \mathcal{S}^-_1,
    \mathcal{S}_2^-, \cdots, \mathcal{S}^-_{n_-} \right\},
    \quad &n_- \ge 1, \\
\mathcal{G}^{+} = \left\{ \mathcal{S}^+, \mathcal{S}^-_1,
    \mathcal{S}_2^-, \cdots, \mathcal{S}^-_{n_-}\right\} ,
    \quad &n_- \ge 0,  \\
    \end{array}
    \right.
\end{equation}
where all $\mathcal{S}_j^-$'s are negatively oriented
and mutually incomparable
with respect to inclusion.
For $\mathcal{G}^{+}$, we also have 
\begin{equation}
  \label{equ:gsurfacerelation}
\forall j = 1,2,\cdots,n_-,\quad \mathcal{S}_j^- \prec \mathcal{S}^+.
\end{equation}
\end{theorem}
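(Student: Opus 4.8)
The plan is to build on \Cref{lem:gluedSurfDecomp}, which already supplies, for a connected Yin set $\mathcal{Y}\neq\emptyset,\mathbb{R}^3$, the unique decomposition of $\partial\mathcal{Y}$ into a finite family of pairwise almost disjoint glued surfaces $\{\mathcal{S}_j\}$; what remains is to fix their orientations, prove the intersection identity \eqref{equ:connectedYinsetrep}, and extract the poset structure in \eqref{equ:setgsurfacetype}--\eqref{equ:gsurfacerelation}. First I would orient each $\mathcal{S}_j$: by the proof of \Cref{lem:gluedSurfDecomp}, $\mathbb{R}^3\setminus\mathcal{S}_j$ has exactly two components, one containing $\mathcal{Y}$ and the other contained in $\mathcal{Y}^{\perp}$; I assign the orientation of $\mathcal{S}_j$ so that $\mathrm{int}(\mathcal{S}_j)$ is the component containing $\mathcal{Y}$ (positive if that component is bounded, negative if unbounded, cf. \Cref{def:orientationOfGluedSurface,def:interiorOfGluedSurface}). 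This choice is forced by demanding $\mathcal{Y}\subseteq\mathrm{int}(\mathcal{S}_j)$, so the oriented decomposition is unique once the unoriented one is.

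Next, for \eqref{equ:connectedYinsetrep}, the inclusion $\mathcal{Y}\subseteq\bigcap_j\mathrm{int}(\mathcal{S}_j)$ is immediate from the orientation choice. For the reverse inclusion I would note that any $x\in\bigcap_j\mathrm{int}(\mathcal{S}_j)$ avoids every $\mathcal{S}_j$, hence avoids $\partial\mathcal{Y}=\bigcup_j\mathcal{S}_j$, so $x\in\mathcal{Y}$ or $x\in\mathcal{Y}^{\perp}$. The crux is ruling out $x\in\mathcal{Y}^{\perp}$: I claim every connected component $O$ of the open set $\mathcal{Y}^{\perp}$ lies in the external complement of at least one $\mathcal{S}_k$. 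Since $\partial O\subseteq\partial\mathcal{Y}$ is two-dimensional while the singular set is at most one-dimensional (\Cref{lem:CW-complex}), $\partial O$ contains a manifold point $q$ lying on some $\mathcal{S}_k$; by \Cref{lem:localTopoOfYinSets}(c) the two local sides of $\mathcal{S}_k$ at $q$ are in $\mathcal{Y}$ and $\mathcal{Y}^{\perp}$, and the $\mathcal{Y}$ side is the $\mathrm{int}(\mathcal{S}_k)$ side, so $O$ is locally on the external side; connectedness of $O$ together with the two-component property of $\mathbb{R}^3\setminus\mathcal{S}_k$ forces $O$ into the external complement, whence $x\in O$ yields $x\notin\mathrm{int}(\mathcal{S}_k)$, a contradiction. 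Therefore $\bigcap_j\mathrm{int}(\mathcal{S}_j)\subseteq\mathcal{Y}$.

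Finally, for the poset structure I would record the key fact that, for every $j$, each other glued surface satisfies $\mathcal{S}_i\subseteq\partial\mathcal{Y}\setminus\mathcal{S}_j\subseteq\mathrm{int}(\mathcal{S}_j)$ (again from the proof of \Cref{lem:gluedSurfDecomp}). If $\mathcal{S}_j$ is negatively oriented then $\mathrm{int}(\mathcal{S}_j)$ is its unbounded complement, so every $\mathcal{S}_i$ lies outside the region enclosed by $\mathcal{S}_j$; this makes any two negatively oriented glued surfaces incomparable under inclusion (\Cref{def:inclusion}). To pin down the two types, let $U$ be the unique unbounded component of $\mathbb{R}^3\setminus\partial\mathcal{Y}$. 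If $U\subseteq\mathcal{Y}$, then $\mathcal{Y}$ is unbounded, every $\mathrm{int}(\mathcal{S}_j)\supseteq\mathcal{Y}$ is unbounded, so every $\mathcal{S}_j$ is negatively oriented and we are in case $\mathcal{G}^{-}$ with $n_-\ge1$. If instead $U\subseteq\mathcal{Y}^{\perp}$, the component argument above places $U$ in the external complement of some $\mathcal{S}^{+}$, whose internal complement is then bounded, i.e. positively oriented; moreover two positively oriented surfaces would each lie in the other's bounded complement, forcing equal bounded complements and hence equality by antisymmetry of inclusion, so exactly one surface is positive and we are in case $\mathcal{G}^{+}$. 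In $\mathcal{G}^{+}$ each $\mathcal{S}_j^{-}\subseteq\mathrm{int}(\mathcal{S}^{+})$ gives $\mathcal{S}_j^{-}<\mathcal{S}^{+}$, and since no $\mathcal{S}_i^{-}$ can sit strictly between (the $\mathcal{S}_i^{-}$ being mutually incomparable), \Cref{def:covering} yields $\mathcal{S}_j^{-}\prec\mathcal{S}^{+}$, which is \eqref{equ:gsurfacerelation}.

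I expect the main obstacle to be the reverse inclusion in \eqref{equ:connectedYinsetrep}, precisely the step that each component of $\mathcal{Y}^{\perp}$ is trapped in the external complement of a single glued surface: it requires combining the purely local two-sidedness of $\partial\mathcal{Y}$ at manifold points with the global Jordan--Brouwer-type two-component splitting of each $\mathcal{S}_k$, and care is needed to guarantee that $\partial O$ genuinely meets some $\mathcal{S}_k$ at a manifold point rather than only along the singular $1$-complex.
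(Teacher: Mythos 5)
Your proposal is correct and follows essentially the same route as the paper: both build on \Cref{lem:gluedSurfDecomp} for the unique unoriented decomposition, fix orientations by requiring $\mathcal{Y}\subseteq\mathrm{int}(\mathcal{S}_j)$ (forced by connectedness and the two-component property of glued surfaces), and then derive \eqref{equ:setgsurfacetype}--\eqref{equ:gsurfacerelation} from connectedness, incomparability of the negatively oriented surfaces, and antisymmetry ruling out two positive ones. In fact your treatment is somewhat more detailed than the paper's own proof, which asserts the reverse inclusion of \eqref{equ:connectedYinsetrep} with little justification, whereas you supply the trapping argument that each component of $\mathcal{Y}^{\perp}$ lies in the external complement of some $\mathcal{S}_k$; your alternative case split via the unbounded component of $\mathbb{R}^3\setminus\partial\mathcal{Y}$ is an organizational, not substantive, difference.
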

\begin{proof}
  By \Cref{lem:gluedSurfDecomp},
  the poset $\mathcal{G}_{\partial \mathcal{Y}}$ 
  is the unique decomposition of $\partial \mathcal{Y}$.
  For each ${\cal S}_j\in \mathcal{G}_{\partial \mathcal{Y}}$, 
   we orient it according to \Cref{def:orientationOfGluedSurface,def:interiorOfGluedSurface}
   so that the internal complement of ${\cal S}_j$ always contains ${\cal Y}$.
  For example, in \Cref{fig:uniquerep}(d)
   the ellipsoid is positively oriented while
   the torus is negatively oriented. 
   
  The connectedness of ${\cal Y}$ implies
   that ${\cal Y}$ is contained
   in a single complement of each ${\cal S}_j$. 
  By \Cref{def:gluedsurface},
   $\mathbb{R}^3\setminus{\cal S}_j$ only has two connected components
   and we have
   ${\cal Y}\subset \Int({\cal S}_j)$
   for each ${\cal S}_j\in \mathcal{G}_{\partial \mathcal{Y}}$.
  Then $\partial{\cal Y}=\cup_j {\cal S}_j$ gives
  (\ref{equ:connectedYinsetrep}). 
   
  Suppose $\mathcal{G}_{\partial \mathcal{Y}}$
  has two or more positively oriented glued surfaces ${\cal S}_1$ and ${\cal S}_2$.
  Then \Cref{def:interiorOfGluedSurface} implies
  that both bounded complements of ${\cal S}_1$ and ${\cal S}_2$
  are subsets of ${\cal Y}$,
  which contradicts ${\cal Y}$ being entirely contained
  in only one complement of each ${\cal S}_j$. 
  
  If $\mathcal{G}_{\partial \mathcal{Y}}$ has no positively oriented
  glued surfaces, 
  $\mathcal{Y} \neq \emptyset$ dictates that 
  at least one negatively oriented glued surface
  exists in $\mathcal{G}_{\partial \mathcal{Y}}$
  and thus $\mathcal{G}_{\partial \mathcal{Y}}$
  must be of the $\mathcal{G}^{-}$ type.
  If $n_- = 1$, (\ref{equ:setgsurfacetype}) holds trivially.
  For $n_->1$, any two glued surfaces $\mathcal{S}_1^-$ and $\mathcal{S}_2^{-}$
  must be incomparable;
  otherwise it would contradict the connectedness of $\mathcal{Y}$.

  If $\mathcal{G}_{\partial \mathcal{Y}}$ has
  one positively oriented glued surfaces, 
  then $\mathcal{G}_{\partial \mathcal{Y}}$ must
  be of the $\mathcal{G}^{+}$ type.
  If $n_- = 0$, (\ref{equ:setgsurfacetype})
  and (\ref{equ:gsurfacerelation}) hold trivially.
  For $n_->0$, 
  suppose (\ref{equ:gsurfacerelation}) did not hold
  for some negatively oriented glued surface $\mathcal{S}_1^-$.
  Then $\mathcal{S}_1^- \succ \mathcal{S}^+$ or
  they are incomparable.
  For the former case, 
   a path from one point in the internal complement of $\mathcal{S}_1^-$
   to another point in the internal complement of $\mathcal{S}^+$ must
   contain some points not in $\mathcal{Y}$,
   and this contradicts the connectedness of $\mathcal{Y}$.
  The latter case also contradicts the connectedness of $\mathcal{Y}$.
  Thus  (\ref{equ:gsurfacerelation}) holds.
  In addition, arguments similar to those in the previous paragraph
  imply that the negatively oriented glued surfaces are pairwise incomparable.
\end{proof}

\Cref{thm:connectedYinsetrep} is applied to
 each connected component of a general Yin set to give

\begin{corollary}[Unique B-rep of Yin sets]
  \label{coro:Yinsetrep}
  For any Yin set $\mathcal{Y} \neq \emptyset, \mathbb{R}^{3}$,
  its boundary $\partial {\cal Y}$ can be uniquely decomposed into a
  collection
  \begin{equation}
    \label{eq:BrepSetsOfYinSet}
    {\cal G}_{\partial {\cal Y}}=\bigl\{
    {\cal G}_i=\{{\cal S}_{i,j}\subset \partial {\cal Y}_i\}:
  {\cal Y}_i \text{ is a connected component of } {\cal Y}
  \bigr\}
  \end{equation}
  of posets of pairwise almost disjoint oriented glued surfaces
  such that 
  \begin{equation}
    \label{equ:Yinsetrep}
    \mathcal{Y} = \bigcup\nolimits^{\perp\perp}_{i}
    \mathop{\bigcap}\nolimits_{j}
    \mathrm{int}(\mathcal{S}_{i,j}), 
  \end{equation}
  where each ${\cal G}_i$ satisfies \Cref{thm:connectedYinsetrep}. 
\end{corollary}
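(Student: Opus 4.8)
The plan is to reduce \Cref{coro:Yinsetrep} to the already-established \Cref{thm:connectedYinsetrep} by decomposing a general Yin set into its connected components and applying the theorem componentwise. The first step is to establish that a Yin set $\mathcal{Y}\ne\emptyset,\mathbb{R}^3$ has at most finitely many connected components, each of which is itself a connected Yin set. This follows because $\mathcal{Y}$ is open (hence its components are open), regular open (so each component is regular open), and semianalytic with bounded boundary; the finiteness of the number of components is forced by the semianalyticity of $\partial\mathcal{Y}$ together with the local finiteness guaranteed by \Cref{lem:localTopoOfYinSets} and the compactness of $\partial\mathcal{Y}$ from \Cref{def:YinSet}. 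Writing $\mathcal{Y}=\sqcup_i\mathcal{Y}_i$, each $\mathcal{Y}_i$ inherits the Yin-set properties and is connected.

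The second step is to apply \Cref{thm:connectedYinsetrep} to each connected component $\mathcal{Y}_i$ separately. This yields, for each $i$, a unique finite poset $\mathcal{G}_i=\{\mathcal{S}_{i,j}\}$ of pairwise almost disjoint oriented glued surfaces of one of the two types in \eqref{equ:setgsurfacetype}, satisfying
\begin{equation}
  \label{eq:componentrep}
  \mathcal{Y}_i=\bigcap\nolimits_j\mathrm{int}(\mathcal{S}_{i,j}).
\end{equation}
Collecting these posets gives the collection \eqref{eq:BrepSetsOfYinSet}. The representation \eqref{equ:Yinsetrep} then follows by taking the regularized union $\bigcup^{\perp\perp}$ over $i$ of the componentwise representations \eqref{eq:componentrep}: since the $\mathcal{Y}_i$ are the connected components of the regular open set $\mathcal{Y}$, their ordinary union recovers $\mathcal{Y}$ as a point set, and the regularization operator $(\cdot)^{\perp\perp}$ is needed only to absorb any lower-dimensional boundary overlaps between distinct components so that the result is again regular open and equal to $\mathcal{Y}$.

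The third step is uniqueness. Here I would argue that the decomposition into connected components is itself canonical (components of a topological space are uniquely determined), so the outer index set $\{i\}$ and the assignment of boundary pieces to components $\partial\mathcal{Y}_i$ are forced; combined with the uniqueness of each $\mathcal{G}_i$ from \Cref{thm:connectedYinsetrep}, the entire collection ${\cal G}_{\partial{\cal Y}}$ is unique. A small point requiring care is to confirm that $\partial\mathcal{Y}=\cup_i\partial\mathcal{Y}_i$ and that the glued surfaces arising from different components are pairwise almost disjoint in the sense of \Cref{def:almostDisjointGsurf}; this holds because distinct components have disjoint interiors, so any intersection of their boundaries is improper and lower-dimensional.

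The main obstacle I anticipate is the interaction of the regularized union with the boundary bookkeeping. The subtlety is that two distinct connected components of $\mathcal{Y}$ may share boundary points (for instance, two solid regions touching along a curve or at a singular point, as in \Cref{fig:yinsetexm}), so the naive union of the $\mathrm{int}(\mathcal{S}_{i,j})$ across components need not be regular open and the same glued surface fragment could in principle appear in two different $\mathcal{G}_i$'s. Justifying that $\bigcup^{\perp\perp}$ exactly repairs this — i.e., that applying $(\cdot)^{\perp\perp}$ to the union of the componentwise representations returns precisely $\mathcal{Y}$ and does not merge or erase any genuine topological feature — is where the argument needs the most care, and I would lean on the regular-openness machinery reviewed in \Cref{sec:YinSets} together with the almost-disjointness of the glued surfaces to close this gap.
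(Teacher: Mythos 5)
Your proposal is correct and takes essentially the same route as the paper, which offers no detailed argument at all beyond the remark that \Cref{thm:connectedYinsetrep} ``is applied to each connected component of a general Yin set''; your componentwise reduction, finiteness check, and uniqueness argument simply fill in what the paper leaves implicit. The obstacle you anticipate in the final paragraph in fact dissolves: since the $\mathcal{Y}_i$ partition $\mathcal{Y}$, the plain union $\bigcup_i\bigcap_j\mathrm{int}(\mathcal{S}_{i,j})$ already equals $\mathcal{Y}$, which is regular open, so the operator $(\cdot)^{\perp\perp}$ acts as the identity here and no repair of boundary overlaps is needed.
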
 

\Cref{coro:Yinsetrep} furnishes
 an appealing advantage of the B-rep
 that global topological invariants of any 3D Yin set ${\cal Y}$
 can be extracted in $O(1)$ time. 
In particular, the number of connected components of ${\cal Y}$
 is 1 if ${\cal G}_{\partial {\cal Y}}$ is of the type ${\cal G}^-$;
 otherwise it is the number of positively oriented glued surfaces
 in ${\cal G}_{\partial {\cal Y}}$. 
Also, the number of holes (i.e., 2-cycles)
 in the closure $\overline{{\cal Y}_i}$ of a connected component of ${\cal Y}$
 is the number of negatively oriented glued surfaces
 in ${\cal G}_{\partial {\cal Y}_i}$. 




\section{Conclusion}
\label{sec:conclusion}

We have proposed the Yin space
 as a model of 3D continua with arbitrarily complex topology.
In particular,
 we have characterized the local and global topology
 of singular points on the boundary of a 3D Yin set.
The main theorem on the global topology of 3D Yin sets
 also leads to their unique B-rep via glued surfaces.
This work is our first step of 3D fluid modeling
 in the study of multiphase flows.

Several future research prospects follow.
By \Cref{thm:globalTopology,thm:connectedYinsetrep}
 and the fact that all compact 2-manifolds can be triangulated, 
 it suffices to employ a simplicial 2-complex
 as the underlying data structure
 for the B-rep and 3D IT of any 3D continua. 
Such a linear MARS for 3D IT is already done \cite{qiu25:_mars}
 and the development of fourth- and higher-order IT methods
 is currently a work in progress.
Other interesting topics of IT for deforming 3D continua
 are the theoretical characterizations
 and algorithmic treatments of topological changes.
Finally, to integrate CAD to CAE,
 we will equip the 3D Yin space with a Boolean algebra
 so that the discretization of spatial operators
 in numerically solving partial differential equations
 can benefit from the explicit modeling
 of topology and geometry.


{\bf Acknowledgments.}
We acknowledge helpful comments
 from Shaozhen Cao, Junxiang Pan, and Chenhao Ye, 
 graduate students at the school of mathematical sciences
 in Zhejiang University.

\bibliographystyle{siamplain}
\bibliography{bib/YinSets3D}
\end{document}